\newtheorem{theorem}{Theorem}[section]
\newtheorem{corollary}[theorem]{Corollary}
\newtheorem{proposition}[theorem]{Proposition}
\newtheorem{lemma}[theorem]{Lemma}
\newtheorem{question}[theorem]{Question}
\newtheorem{problem}[theorem]{Problem}
\theoremstyle{definition}
\newtheorem{remark}[theorem]{Remark}
\newtheorem{example}[theorem]{Example}
\def\N{{\mathbb N}}
\begin{document}

\title[The topological properties of $q$-spaces in free topological groups]
{The topological properties of $q$-spaces in free topological groups}

\author{Fucai Lin}
\address{(Fucai Lin): School of mathematics and statistics,
Minnan Normal University, Zhangzhou 363000, P. R. China}
\email{linfucai2008@aliyun.com}

\author{Chuan Liu}
\address{(Chuan Liu): Department of Mathematics,
Ohio University Zanesville Campus, Zanesville, OH 43701, USA}
\email{liuc1@ohio.edu}

\author{Shou Lin}
\address{(Shou Lin): Institute of Mathematics, Ningde Teachers' College, Ningde, Fujian
352100, P. R. China} \email{shoulin60@163.com}

\thanks{The first author is supported by the NSFC (Nos. 11201414, 11471153, 11571158), the
  Natural Science Foundation of Fujian Province (No. 2012J05013) of China and Training
  Programme Foundation for Excellent Youth Researching Talents of Fujian's Universities
  (JA13190) and the project of Abroad Fund of Minnan Normal University.}

\keywords{Free topological groups; $q$-spaces; $r$-spaces; $\omega$-bounded; pseudocompact.}
\subjclass[2000]{primary 22A30; secondary 54D10; 54E99; 54H99}

\begin{abstract}
Given a Tychonoff space $X$, let $F(X)$ and $A(X)$ be respectively the free
topological group and the free Abelian topological group over $X$ in the sense
of Markov. In this paper, we provide some topological properties of $X$ whenever one of $F(X)$, $A(X)$, some finite level of $F(X)$ and some finite level of $A(X)$ is $q$-space (in particular, locally $\omega$-bounded spaces and $r$-spaces), which give some partial answers to a problem posed in \cite{T2003}.
\end{abstract}

\maketitle
\section{Introduction}
Let $F(X)$ and $A(X)$ be respectively the free topological group
and the free Abelian topological group over a Tychonoff space $X$
in the sense of Markov \cite{MA1945}. For every $n\in\mathbb{N}$, by $F_{n}(X)$ we denote the subspace of $F(X)$ that consists of all words of reduced length at most $n$ with respect to the free basis $X$. The subspace
$A_{n}(X)$ is defined similarly.
We always use $G(X)$ to denote $F(X)$ or $A(X)$, and $G_{n}(X)$ to $F_{n}(X)$ or $A_{n}(X)$ for each $n\in \mathbb{N}$. Therefore, any statement about $G(X)$ applies to $F(X)$ and $A(X)$, and $G_{n}(X)$ applies to $F(X)$ and $A(X)$.

One of the techniques of studying the topological structure of free topological groups is
to clarify the relations of subspaces $X$, $G(X)$, and $G_{n}(X)$, where $n\in\mathbb{N}$.
It is well known that only when $X$ is discrete, $G(X)$ can be first-countable. Therefore, $G(X)$ is first-countable if and only if $X$ is discrete \cite{G1962}. Similarly, the group $G(X)$ is locally compact if and only if $X$ is discrete \cite{D961}. More generally, P. Nickolas and M. Tkachenko proved that if $G(X)$ is almost metrizable, then $X$ is discrete \cite{T2003}.
Further, K. Yamada gave an characterization for a metrizable space $X$ such that some $G_{n}(X)$ is first-countable \cite{Y1998}.
Then P. Nickolas and M. Tkachenko showed that $F_{4}(X)$ is of pointwise countable type if and only if $X$ is either compact or discrete \cite{T2003}.
Since each space being of pointwise countable type is a $q$-space, P. Nickolas and M. Tkachenko posed the following problem in \cite{T2003}.

\begin{problem}\cite[Problem 5.2]{T2003}\label{pq}
Characterize the spaces $X$ such that $F_{2}(X)$ is a $q$-space. What about $F_{n}(X)$ for all $n\in\N$? The Abelian case?
\end{problem}

In this article, we shall give some partial answers to Problem~\ref{pq}. This article is organized as follows. Section 2 introduces the notation and terminology used throughout the paper. In Section 3, we characterize the spaces $X$ such
that $F(X)$ and $A(X)$ are $q$-spaces. Moreover, the $q$-subspaces of the groups $F(X)$ and $A(X)$ are studied. In Section 4, we show that $X$ must belong to some class of spaces if $F_{4}(X)$ is a $q$-space. We also show
that if $F_{2}(X)$ is a $q$-space then the set $X^{\prime}$ of all
non-isolated points of $X$ is bounded in $X$. In section 5,  we mainly prove that $F_{2}(X)$ is locally $\omega$-bounded if $X$ is homeomorphic to the topological sum of an $\omega$-bounded space and a
discrete space. In Section 6, some examples and questions are presented and posed respectively.

\maketitle
\section{Notation and Terminologies}
 In this section, we introduce the necessary notation and terminologies. Throughout this paper, all topological spaces are assumed to be
  Tychonoff, unless explicitly stated otherwise. For undefined notation
  and terminologies, refer to \cite{AT2008}, \cite{E1989} and \cite{Gr}.
  First of all, let $\N$ and $\omega_{1}$ denote the set of positive integers and the
  first uncountable order, respectively.

Let $X$ be a space and $x\in X$. If there exists a sequence $\{U_{n}: n\in\N\}$ of open neighborhoods of $x$ in $X$ satisfying the following {\bf ($\blacklozenge$)}, then we say that $x$ is a {\it $r$-point} (resp. $q$-point) in $X$ and the sequence $\{U_{n}: n\in\N\}$ is a {\it $r$-sequence} (resp. {\it $q$-sequence}).

  \medskip
{\bf ($\blacklozenge$)} \emph{For an arbitrary sequence $\{x_{n}\in U_{n}: n\in\N\}$, there exists a compact subset (resp. countably compact subset) $K$ of $X$ such that $\{x_{n}\in U_{n}: n\in\N\}\subset K.$}

  \medskip
A space $X$ is said to be a {\it $r$-space} (resp. {\it $q$-space}) if each point of $X$ is a $r$-point (resp. $q$-point) in $X$.
Recall that a space $X$ is {\it locally $\omega$-bounded} at point $x\in X$ if there exists an open neighborhood $U$ at $x$ in $X$ such that the closure of every countable subset of $\overline{U}$ is compact in $\overline{U}$. In particular, $X$ is said to be {\it locally $\omega$-bounded} if it is locally $\omega$-bounded at each point of $X$. A space $X$ is {\it $\omega$-bounded} \cite{AA2009} if the closure of every countable subset of $X$ is compact.
The following
  implications follow directly from definitions:

  \setlength{\unitlength}{1cm}
\begin{picture}(15,2.5)\thicklines
 \put(5, 1.7){\vector(0,-1){2.5}}
 \put(5.7, 1.9){\vector(1,0){3.7}}
 \put(5,1.9){\makebox(0,0){compact}}
 \put(10.5,1.9){\makebox(0,0){$\omega$-bounded}}
 \put(10.5, 1.7){\vector(1,-1){1}}
 \put(10.5, 1.7){\vector(-1,-1){1}}
 \put(12.1,0.5){\makebox(0,0){countably compact}}
 \put(9,0.5){\makebox(0,0){locally $\omega$-bounded}}
 \put(11.7,0.3){\vector(0,-1){1}}
 \put(9.1,0.3){\vector(0,-1){1}}
 \put(2.8,-1){\vector(1,0){1}}
 \put(1.5,-1){\makebox(0,0){First-countable}}
 \put(5.8,-1){\makebox(0,0){pointwise countable type}}
 \put(7.8,-1){\vector(1,0){1}}
 \put(9.5,-1){\makebox(0,0){$r$-space}}
 \put(10.1,-1){\vector(1,0){1}}
 \put(11.7,-1){\makebox(0,0){$q$-space}}
 \put(5.8,-1.5){\makebox(0,0){({\bf A})}}
\end{picture}

\vskip 2cm\setlength{\parindent}{0.5cm}

Note that none of the above implications can be reversed.

 \medskip
 Let $X$ be a topological space $X$ and $A$ be a subset of $X$.
  The \emph{closure} of $A$ in $X$ is denoted by $\overline{A}$. The subset $A$ is called
  \emph{bounded} if every continuous real-valued
  function $f$ defined on $A$ is bounded. If the closure of every bounded
  set in $X$ is compact, then $X$ is called a \emph{$\mu$-space}.
  The \emph{derived set}  of $X$ is denoted by
  $X'$. In addition, $X$ is called a
  \emph{$k$-space} provided that a subset $C\subseteq X$ is closed in $X$ if
 $C\cap K$ is closed in $K$ for each compact subset $K$ of $X$. In particular, $X$ is called a \emph{$k_{\omega}$-space} if there exists a family of countably many compact subsets $\{K_{n}: n\in\mathbb{N}\}$ in $X$ such that each subset $F$ of $X$ is closed in $X$ if $F\cap K_{n}$ is closed in $K_{n}$ for each $n\in\mathbb{N}$.

 \medskip
  Let $X$ be a non-empty space. Throughout this paper, $X^{-1}
  :=\{x^{-1}: x\in X\}$ and $-X: =\{-x: x\in X\}$, which are
 just two copies of $X$. Let $e$ be the neutral element of $F(X)$ (i.e., the empty
  word) and $0$ be that of $A(X)$. For every $n\in\N$ and an element
  $(x_{1}, x_{2}, \cdots, x_{n})$ of $(X\bigoplus X^{-1}\bigoplus\{e\})^{n}$
  we call $g=x_{1}x_{2}\cdots x_{n}$ a {\it form}. In the Abelian case, $x_{1}+x_{2}\cdots +x_{n}$ is also called a {\it form} for $(x_{1}, x_{2},
  \cdots, x_{n})\in(X\bigoplus -X\bigoplus\{0\})^{n}$. This word $g$ is
  called {\it reduced} if it does not contains $e$ or any pair of
  consecutive symbols of the form $xx^{-1}$ or $x^{-1}x$. It follows
  that if the word $g$ is reduced and non-empty, then it is different
  from the neutral element $e$ of $F(X)$. Similar
  assertions (with the obvious changes for commutativity) are valid for
  $A(X)$. For every $n\in\mathbb{N}$, let $i_n: (X\bigoplus X^{-1}
  \bigoplus\{e\})^{n} \to F_n(X)$ be the natural mapping defined by
  \[
  i_n(x_1, x_2, ... x_n)=
  x_1x_2...x_n
  \]
  for each $(x_1, x_2, ... x_n) \in (X\bigoplus X^{-1} \bigoplus\{e\})^{n}$.
  We also use the same symbol in the Abelian case, that is, in means
  the natural mapping from $(X\bigoplus -X\bigoplus\{0\})^{n}$ onto
  $A_{n}(X)$. Clearly, each $i_n$ is a continuous mapping.

\maketitle
\section{The characterizations of $q$-spaces in free topological groups}
In this section, we shall give a characterization for a space $X$ such that $G(X)$ is a $q$-space. Then, we show that $X$ contains a copy of $P$ if the free topological group $G(X)$ contains a copy of some $q$-space $P$.

We first show two lemmas and a proposition which play an important role in the proof of our main theorem in this section.

\begin{lemma}\label{open-neighborhood}
Let $X$ be a non-discrete space. Then, for each open neighborhood $U$ of $e$ in $A(X)$, we have $U\nsubseteq A_{k}(X)$ for each $k\in \mathbb{N}$.
\end{lemma}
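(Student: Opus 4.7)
The plan is to exploit the non-discreteness of $X$ to produce, inside any prescribed open neighborhood $U$ of the neutral element of $A(X)$, an element of reduced length strictly greater than $k$. The two key ingredients are continuity of the group operation in $A(X)$ and the fact that in a Tychonoff space every non-isolated point has arbitrarily many distinct points in every neighborhood.

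Since $X$ is not discrete, I would first fix a non-isolated point $x_{0}\in X$. Given $U$ and $k\in\mathbb{N}$, continuity of addition at $0$, applied inductively, yields a symmetric open neighborhood $V$ of $0$ in $A(X)$ such that the sum of any $k+1$ elements of $V$ lies in $U$. Next, the continuous map $f:X\to A(X)$ defined by $f(x)=x-x_{0}$ sends $x_{0}$ to $0\in V$, so there is an open neighborhood $W$ of $x_{0}$ in $X$ with $x-x_{0}\in V$ for every $x\in W$. Because $x_{0}$ is non-isolated in the Tychonoff space $X$, the set $W\setminus\{x_{0}\}$ is infinite, and one can select $k+1$ pairwise distinct points $x_{1},\ldots,x_{k+1}\in W\setminus\{x_{0}\}$.

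The candidate element is
$$ g = (x_{1}-x_{0})+(x_{2}-x_{0})+\cdots+(x_{k+1}-x_{0}) = x_{1}+x_{2}+\cdots+x_{k+1}-(k+1)\,x_{0}. $$
Each summand $x_{i}-x_{0}$ lies in $V$, so by the choice of $V$ we have $g\in U$. Since $x_{0},x_{1},\ldots,x_{k+1}$ are $k+2$ pairwise distinct elements of $X$ and the coefficients on the right-hand side are $1,1,\ldots,1,-(k+1)$ (all non-zero), this expression is already in reduced form, and $g$ has reduced length $(k+1)+(k+1)=2(k+1)>k$. Therefore $g\in U\setminus A_{k}(X)$, which gives $U\nsubseteq A_{k}(X)$. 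No serious obstacle is anticipated; the only mildly delicate point is verifying the reduced length, and this is immediate from the pairwise distinctness of $x_{0},x_{1},\ldots,x_{k+1}$ guaranteed by the non-isolation of $x_{0}$.
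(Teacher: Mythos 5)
Your proof is correct and follows essentially the same strategy as the paper: both arguments use continuity of the group operation to find a neighborhood $V$ of $0$ whose $(k+1)$-fold sum lies in $U$, and then exhibit an element of $U$ of reduced length exceeding $k$. The only difference is the witness: the paper takes an arbitrary nonzero $x\in V$ and observes that $(k+1)x$ has length at least $k+1$ because no cancellation occurs in the Abelian group, whereas you build the element explicitly as $\sum_{i=1}^{k+1}(x_i-x_0)$ from distinct points near a non-isolated point of $X$ --- a slightly longer but equally valid route.
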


\begin{proof}
Assume on the contrary that there exist an open neighborhood $U$ of $e$ in $A(X)$ and a $k_0\in\mathbb{N}$ such that $U\subseteq A_{k_0}(X)$. Since $A(X)$ is a topological group, there is an open neighborhood $V$ of $e$ in $A(X)$ with $V^{k_0+1}\subseteq U$. Since $X$ is not discrete, $A(X)$ is also not discrete. Take an arbitrary point $x\in V\setminus \{e\}$. Then we have $$x^{k_0+1}\in V^{k_0+1}\subseteq U\subseteq A_{k_0}(X).$$ Since $A(X)$ is Abelian, the length of $x^{k_0+1}$ is at least $k_0+1$. However, each element of $A_{k_0}(X)$ is at most $k_0$. A
contradiction occurs.
\end{proof}

It is well known that each $A(X)$ is a quotient group of $F(X)$ for each space $X$, hence $A(X)$ is an open image of $F(X)$. Then it follows from Lemma~\ref{open-neighborhood} that we also have the following lemma.

\begin{lemma}\label{open-neighborhood 1}
Let $X$ be a non-discrete space. Then, for each open neighborhood $U$ of $e$ in $F(X)$, we have $U\nsubseteq F_{k}(X)$ for each $k\in \mathbb{N}$.
\end{lemma}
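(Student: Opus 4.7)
The plan is to deduce this lemma from Lemma \ref{open-neighborhood} by transferring the problem from $F(X)$ to $A(X)$ via the canonical quotient homomorphism. Concretely, let $\pi: F(X)\to A(X)$ be the continuous homomorphism extending the identity $X\to X$; as noted in the paragraph preceding the statement, $\pi$ is an open surjection (the abelianization map), so in particular it sends the identity $e$ of $F(X)$ to the identity $0$ of $A(X)$ and sends open neighborhoods of $e$ to open neighborhoods of $0$.

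First, I would observe the key compatibility between $\pi$ and the reduced-length filtration: if $g=x_1x_2\cdots x_n\in F_k(X)$ is a reduced word of length $\le k$, then $\pi(g)=x_1+x_2+\cdots+x_n$ has length at most $k$ after abelian reduction, so $\pi(F_k(X))\subseteq A_k(X)$ for every $k\in\mathbb{N}$. This is just the trivial observation that abelianizing cannot increase the reduced length.

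Next, assume for contradiction that there exist an open neighborhood $U$ of $e$ in $F(X)$ and some $k_0\in\mathbb{N}$ with $U\subseteq F_{k_0}(X)$. Applying $\pi$ yields
\[
\pi(U)\subseteq \pi(F_{k_0}(X))\subseteq A_{k_0}(X).
\]
Since $\pi$ is open, $\pi(U)$ is an open neighborhood of $0$ in $A(X)$, and since $X$ is non-discrete, Lemma \ref{open-neighborhood} tells us that $\pi(U)\nsubseteq A_{k_0}(X)$, a contradiction. No step here looks delicate; the only thing worth double-checking is that $\pi$ is indeed open (which is standard, and explicitly asserted in the excerpt) and that the inclusion $\pi(F_k(X))\subseteq A_k(X)$ holds, which is immediate from the definitions of reduced length in the free and free Abelian cases.
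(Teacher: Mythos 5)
Your proposal is correct and is essentially the paper's own argument: the authors likewise deduce this lemma from Lemma \ref{open-neighborhood} by noting that $A(X)$ is an open (quotient) image of $F(X)$, with the inclusion $\pi(F_k(X))\subseteq A_k(X)$ left implicit. You have simply written out the details that the paper compresses into one sentence.
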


\begin{proposition}\label{p1}
Suppose that $X$ is a space. If $Y$ is a $q$-subspace in $G(X)$. Then the family $\{(G_{n+1}\setminus G_{n})\cap Y: n\in\mathbb{N}\}$ is discrete in $Y$.
\end{proposition}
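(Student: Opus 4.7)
I would proceed by contradiction, combining the $q$-point hypothesis at a putatively bad point $y_0\in Y$ with the classical boundedness theorem for free topological groups: every bounded subset of $F(X)$ or $A(X)$ is contained in some finite level $G_N(X)$ (see \cite{AT2008}). Since countably compact subsets of a Tychonoff space are bounded (every continuous real-valued function is bounded on such a set), it follows that every countably compact subset of $G(X)$ lies in some $G_N(X)$.

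The plan is then to prove the following local containment: for every $y_0\in Y$ there exist an open neighborhood $U$ of $y_0$ in $Y$ and an integer $N\in\N$ with $U\subseteq G_N(X)$. This suffices to conclude, because such a $U$ meets the member $(G_{n+1}(X)\setminus G_n(X))\cap Y$ only when $n<N$; the members indexed by $n\geq N$ are all disjoint from $U$, which delivers the required discreteness of the family at $y_0$.

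To prove the local containment, fix a decreasing $q$-sequence $\{U_n\}_{n\in\N}$ of open neighborhoods of $y_0$ in $Y$ witnessing condition ($\blacklozenge$). Suppose for contradiction that no $U_n$ is contained in any $G_N(X)$. Then for each $n$ we may pick $y_n\in U_n$ of reduced length at least $n$ in $G(X)$. The $q$-property supplies a countably compact subset $K\subseteq Y$ containing all the $y_n$'s; by the boundedness observation above, $K\subseteq G_M(X)$ for some $M$, so $|y_n|\leq M$ for every $n$, contradicting the unbounded growth $|y_n|\geq n\to\infty$. The main external ingredient is the cited boundedness theorem; once that is invoked, the proof is a clean single application of the $q$-sequence property, and Lemmas~\ref{open-neighborhood} and \ref{open-neighborhood 1} do not appear to be needed for this particular proposition.
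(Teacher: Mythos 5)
Your argument follows essentially the same route as the paper's proof: the only external ingredient in both is \cite[Corollary 7.4.4]{AT2008} (every countably compact subset of $G(X)$ lies in some $G_N(X)$), combined with a single application of the $q$-sequence to force a sequence of points of unbounded reduced length into one countably compact set. Your reorganization through the ``local containment'' claim (some $U_{n_0}\subseteq G_N(X)$) is a clean equivalent of the paper's contradiction argument, and you are right that Lemmas~\ref{open-neighborhood} and~\ref{open-neighborhood 1} play no role here.

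There is, however, a gap in your last step, and it is worth naming because the paper's own proof contains the mirror image of it. From $U\subseteq G_N(X)$ you conclude that $U$ meets only the members indexed by $n<N$; that establishes that the family is \emph{locally finite} at $y_0$, not that it is \emph{discrete} there, since a discrete family requires a neighborhood meeting at most \emph{one} member, and your $U$ may still meet up to $N$ of them. The paper commits the dual error: from ``the family is non-discrete at $g$'' it extracts an increasing sequence $k_n$ with $U_n\cap(G_{k_n+1}\setminus G_{k_n})\neq\emptyset$, which is only licensed by the stronger hypothesis that the family fails to be locally finite at $g$. In fact discreteness in the strict sense is false: for $X$ compact and non-discrete, $Y=F_4(X)$ is compact, hence a $q$-subspace, yet every neighborhood of $e$ meets both $F_2(X)\setminus F_1(X)$ (via words $y^{-1}y'$ with $y'\to y$) and $F_4(X)\setminus F_3(X)$ (via reduced words $x^{-1}x'y^{-1}y'$), so at least two members of the family. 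Thus what both you and the paper actually prove is local finiteness of the family; that weaker conclusion is all that the subsequent applications of the proposition require, but the passage from ``$U$ meets only finitely many members'' to ``the family is discrete'' should not be asserted.
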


\begin{proof}
Assume on the contrary that the family
$\{(G_{n+1}\setminus G_{n})\cap Y: n\in\mathbb{N}\}$ is non-discrete in $Y$. Then there exits a point $g\in Y$ such that the family $\{(G_{n+1}\setminus G_{n})\cap Y: n\in\mathbb{N}\}$ is non-discrete at the point $g$ in $Y$. Since $Y$ is a $q$-space, there exists a sequence $\{U_{n}: n\in\mathbb{N}\}$ of open neighborhoods at $g$ in $Y$ satisfying ($\blacklozenge$). By the assumption, we can choose an increasing subsequence $\{k_{n}: n\in\mathbb{N}\}$ of $\mathbb{N}$ such that $U_{n}\cap (G_{k_{n}+1}\setminus G_{k_{n}})\neq\emptyset$ for each $n\in\mathbb{N}$, and then pick a point $g_{n}\in U_{n}\cap (G_{k_{n}+1}\setminus G_{k_{n}})$ for each $n\in\mathbb{N}$. By ($\blacklozenge$), the set $\{g_{n}: n\in\mathbb{N}\}$ is contained in some countably compact subset $K$ in $Y$. Obviously, $K$ is also a countably compact subset in $G(X)$, then it follows from \cite[Corollary 7.4.4]{AT2008} that $K$ is contained in some $G_{m}(X)$ for some $m\in\mathbb{N}$, which is a contradiction.
\end{proof}

By Lemmas ~\ref{open-neighborhood}, ~\ref{open-neighborhood 1} and Proposition~\ref{p1}, we obtain one of the main theorems in this section, which generalizes a result in \cite{T2003}.

\begin{theorem}\label{q-space}
Let $X$ be a space. If $G(X)$ is a $q$-space, then $X$ is discrete.
\end{theorem}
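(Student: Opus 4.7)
The plan is to argue by contradiction, invoking Proposition~\ref{p1} at the neutral element $e$ and then using Lemmas~\ref{open-neighborhood} and~\ref{open-neighborhood 1} to derive a contradiction from the local structure of $G(X)$ near $e$.

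Suppose $X$ is not discrete. Since $G(X)$ is a $q$-space by hypothesis, I would apply Proposition~\ref{p1} with $Y = G(X)$ to conclude that the family $\mathcal{F} = \{G_{n+1}(X)\setminus G_{n}(X) : n\in\N\}$ is discrete in $G(X)$. In particular, the family $\mathcal{F}$ is discrete at the point $e$.

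Noting that $e\in G_1(X)$ and hence $e\notin G_{n+1}(X)\setminus G_{n}(X)$ for every $n\in\N$, discreteness of $\mathcal{F}$ at $e$ provides an open neighborhood $V$ of $e$ in $G(X)$ such that $V$ meets at most one member of $\mathcal{F}$, say the one indexed by $n_0$. Thus $V\cap (G_{n+1}(X)\setminus G_{n}(X)) = \emptyset$ for every $n\ge n_0+1$. Since $G(X) = G_{n_0+1}(X) \cup \bigcup_{n\ge n_0+1}(G_{n+1}(X)\setminus G_{n}(X))$, this forces $V\subseteq G_{n_0+1}(X)$.

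But now Lemma~\ref{open-neighborhood 1} in the non-Abelian case, and Lemma~\ref{open-neighborhood} in the Abelian case, asserts that no open neighborhood of $e$ can lie inside any $G_{k}(X)$ when $X$ is non-discrete; this contradicts $V\subseteq G_{n_0+1}(X)$ and completes the argument. The main conceptual step is the application of Proposition~\ref{p1} at $e$ to pin down a neighborhood of $e$ inside some finite level of $G(X)$; the rest is bookkeeping, and I do not expect any serious obstacle.
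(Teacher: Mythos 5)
Your argument is correct and is precisely the route the paper intends: it derives the theorem from Proposition~\ref{p1} (discreteness of the level decomposition at $e$, yielding a neighborhood of $e$ inside some $G_{k}(X)$) combined with Lemmas~\ref{open-neighborhood} and~\ref{open-neighborhood 1}, which is exactly how the paper states the result follows. The paper leaves these bookkeeping details implicit, so your write-up simply fills in the same proof.
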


Moreover, by Proposition~\ref{p1}, we have the following result.

\begin{proposition}
Let $X$ be a non-discrete space. Then, for every sequence
$\{n_{i}: i\in \mathbb{N}\}$ of natural numbers, the set $\bigcup_{i\in\mathbb{N}}(G_{n_{i}}(X)\setminus G_{n_{i}-1}(X))$ is not a $q$-subspace.
\end{proposition}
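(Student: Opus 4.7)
The plan is to argue by contradiction. Suppose $Y := \bigcup_{i\in\mathbb{N}}(G_{n_i}(X)\setminus G_{n_i-1}(X))$ is a $q$-subspace of $G(X)$; by Proposition~\ref{p1} the family $\mathcal{F} := \{(G_{m+1}\setminus G_m)\cap Y : m\in\mathbb{N}\}$ is discrete in $Y$, so my goal is to exhibit a point $g\in Y$ whose every neighborhood in $Y$ meets infinitely many members of $\mathcal{F}$, giving a contradiction. The statement is substantive only when the range $S:=\{n_i : i\in\mathbb{N}\}$ is infinite (otherwise $Y\subseteq G_N$ for some $N$ and can be a compact subspace, trivially a $q$-subspace), so I assume this.

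Pick any non-isolated $x_0\in X$ (available because $X$ is non-discrete) and a sequence $(x_n)\subseteq X\setminus\{x_0\}$ with $x_n\to x_0$. For each $k\in\mathbb{N}$ define
\[
h_{k,n}:=(x_0^{-1}x_n)^k\in F(X),\qquad h_{k,n}:=k(x_n-x_0)\in A(X);
\]
each is a reduced word of length $2k$ and satisfies $h_{k,n}\to e$ as $n\to\infty$ (for fixed $k$), by continuity of the group operations. Attaching such an $h_{k,n}$ to a word $g$ without cancellation increases the reduced length by exactly $2k$, so the only lengths one can realise near $g$ by this construction have the parity of $\mathrm{len}(g)$, which forces a pigeonhole step. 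Since $S$ is infinite, one parity $p\in\{0,1\}$ is attained by infinitely many elements of $S$; fix $i_0$ with $n_{i_0}\equiv p\pmod 2$, and choose $g\in G_{n_{i_0}}\setminus G_{n_{i_0}-1}$ whose reduced expression does not cancel with $h_{k,n}$ on the right (in $F(X)$: the last letter is not $x_0$; in $A(X)$: the support avoids $x_0$). This is always possible because $|X|\geq 2$.

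Now let $V$ be any open neighborhood of $g$ in $G(X)$. For each $k\in\mathbb{N}$, pick $n$ large enough that $h_{k,n}\in g^{-1}V$ (and in the Abelian case so that $x_n$ avoids the finite support of $g$); then $g\cdot h_{k,n}\in V$ is reduced of length exactly $n_{i_0}+2k$ by the non-cancellation condition. By the parity pigeonhole there are infinitely many indices $j$ with $n_j>n_{i_0}$ and $n_j-n_{i_0}$ even; for each such $j$, taking $k_j:=(n_j-n_{i_0})/2$ yields $g\cdot h_{k_j,n}\in V\cap(G_{n_j}\setminus G_{n_j-1})\subseteq V\cap Y$. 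Hence $V\cap Y$ meets infinitely many members of $\mathcal{F}$, contradicting discreteness. The main obstacle is the length bookkeeping that forces the parity pigeonhole: one must verify that $h_{k,n}$ is reduced, that no cancellation occurs with $g$, and that only lengths of the same parity as $n_{i_0}$ are realised near $g$ by this construction, so selecting $i_0$ matching the ``majority'' parity in $S$ is essential.
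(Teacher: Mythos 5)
Your proof follows the same skeleton as the paper's: reduce the statement to the claim that the family of levels $\{G_{n_i}(X)\setminus G_{n_i-1}(X):i\in\mathbb{N}\}$ is not discrete in the union, then invoke Proposition~\ref{p1}. The difference is that the paper disposes of the non-discreteness in one line by citing the proof of Corollary~3.4 of \cite{Y2002}, whereas you construct the witnesses explicitly via the words $g(x_0^{-1}x)^k$ and a parity pigeonhole. Your explicit route has the merit of being self-contained, and your preliminary observation is a genuine point the paper glosses over: as literally stated the proposition fails for sequences with finite range (for $X=[0,1]$ and $n_i\equiv 2$, the set $F_2(X)\setminus F_1(X)$ is open in the compact space $F_2(X)$, hence locally compact and a $q$-space), so the intended hypothesis that $\{n_i\}$ is unbounded must be added, exactly as you do.

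There is one step that fails as written for a general Tychonoff space: you ``pick a sequence $(x_n)\subseteq X\setminus\{x_0\}$ with $x_n\to x_0$''. A non-isolated point of a non-discrete Tychonoff space need not admit a nontrivial convergent sequence (consider $X=\beta\mathbb{N}$, or the point $\omega_1$ in $[0,\omega_1]$), so this object may not exist. Fortunately your argument never uses the sequence qua sequence: all you need is that for each $k$ and each neighborhood $V$ of $g$ there is some $y\in X\setminus\{x_0\}$ (avoiding the finite support of $g$ in the Abelian case) with $g(x_0^{-1}y)^k\in V$. This follows from continuity of the map $y\mapsto g(x_0^{-1}y)^k$ from $X$ to $G(X)$, which sends $x_0$ to $g$: the preimage of $V$ is an open neighborhood of the non-isolated point $x_0$ and therefore contains infinitely many points of $X$, in particular one outside the finite excluded set. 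With the convergent sequence replaced by this continuity argument, the remaining bookkeeping (reducedness of $(x_0^{-1}y)^k$, non-cancellation against a word of length $n_{i_0}$ not ending in $x_0$, and the selection of infinitely many $n_j>n_{i_0}$ of the same parity) is correct, and the contradiction with the discreteness guaranteed by Proposition~\ref{p1} is obtained.
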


\begin{proof}
By the proof of \cite[Corollary 3.4]{Y2002}, it easily check that $\{G_{n_{i}}(X)\setminus G_{n_{i}-1}(X): i\in\mathbb{N}\}$ is not discrete in $\bigcup_{i\in\mathbb{N}}(G_{n_{i}}(X)\setminus G_{n_{i}-1}(X))$. Therefore, it follows from Proposition~\ref{p1} that $\bigcup_{i\in\mathbb{N}}(G_{n_{i}}(X)\setminus G_{n_{i}-1}(X))$ is not a $q$-space.
\end{proof}

Next, we shall consider the question when $X$ contains a copy of $q$-space $P$ if $G(X)$ contains a copy of some $q$-space $P$. We first recall some concepts and show a technical lemma.

We say that a space $X$ is {\it densely self}-{\it embeddable}, if each non-empty open set
in $X$ contains a copy of $X$. A space $P$ is said to
be {\it prime} if for any spaces $X $ and $Y$, we have the following statement:

\medskip
\emph{If $X\times Y$ contains a copy of $P$,
then either $X$ or $Y$ contains a copy of $P$.}

\begin{lemma}\label{self-embeddable}
Assume that $Y$ is a densely self-embeddable, $q$-space and
$G(X)$ contains a copy of $Y$. Then, some $G_{n}(X)$ contains a copy of
$Y$.
\end{lemma}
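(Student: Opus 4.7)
The plan is to use Proposition~\ref{p1} to stratify $Y$ by the filtration $\{G_n(X)\}_{n\in\N}$ and then use dense self-embeddability to pull a copy of $Y$ into a single level of that filtration. Concretely, I identify $Y$ with its copy in $G(X)$ and set $Y_n:=(G_{n+1}(X)\setminus G_n(X))\cap Y$ for each $n\in\N$. Since $Y$ is a $q$-subspace of $G(X)$, Proposition~\ref{p1} immediately gives that the family $\{Y_n:n\in\N\}$ is discrete in $Y$.

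The key technical step is to upgrade ``discrete'' to ``$Y_n$ is open in $Y$''. Given $y\in Y_n$, discreteness supplies an open neighborhood $V$ of $y$ in $Y$ which meets no $Y_m$ with $m\neq n$, so $V\subseteq Y_n\cup(Y\cap G_1(X))$. On the other hand, $G_n(X)$ is closed in $G(X)$ (a standard fact about Markov free topological groups) and $y\notin G_n(X)$, so there is an open neighborhood $W$ of $y$ in $Y$ with $W\cap G_n(X)=\emptyset$; in particular $W\cap G_1(X)=\emptyset$ since $G_1(X)\subseteq G_n(X)$. Then $V\cap W$ is an open neighborhood of $y$ in $Y$ contained in $V$ yet disjoint from $G_1(X)$, hence $V\cap W\subseteq Y_n$, which shows $Y_n$ is open in $Y$.

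To conclude, I split into two cases. If every $Y_n$ is empty then $Y\subseteq G_1(X)$ and the lemma holds trivially with $n=1$. Otherwise, fix any $n$ with $Y_n\neq\emptyset$; then $Y_n$ is a nonempty open subset of $Y$, so by dense self-embeddability of $Y$ it contains a homeomorphic copy of $Y$. Since $Y_n\subseteq G_{n+1}(X)$, this copy lies in $G_{n+1}(X)$, and the lemma is proved.

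The main obstacle is the openness of $Y_n$ in $Y$: mere discreteness of $\{Y_n\}$ does not, on its own, prevent points of $Y_n$ from being accumulated by points of $Y\cap G_1(X)$, so the closedness of $G_n(X)$ inside $G(X)$ must be invoked separately to carve out a genuine open neighborhood inside $Y_n$. Once this is secured, the rest is a one-line application of dense self-embeddability to any nonempty $Y_n$.
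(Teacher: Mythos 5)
Your proof is correct, but it takes a genuinely different route from the paper's. The paper argues directly at a single point: it takes a $q$-sequence $\{U_n\}$ at an arbitrary $y\in Y$, supposes $U_n\not\subseteq G_n(X)$ for all $n$, picks $y_n\in U_n\setminus G_n(X)$, and derives a contradiction from \cite[Corollary 7.4.4]{AT2008} (a countably compact set containing all the $y_n$ would have to lie in some $G_m(X)$); this immediately yields a nonempty open $U_k\subseteq G_n(X)$, to which dense self-embeddability is applied. You instead recycle Proposition~\ref{p1} (whose proof encapsulates exactly that same $q$-sequence argument) to get discreteness of the strata $Y_n=(G_{n+1}(X)\setminus G_n(X))\cap Y$, and then do additional work -- invoking the closedness of $G_n(X)$ in $G(X)$, a standard fact the paper's proof never needs -- to upgrade discreteness to openness of each $Y_n$ in $Y$. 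Your openness argument is sound (and in fact you could stop at the neighborhood $V\cap W\subseteq Y_n\subseteq G_{n+1}(X)$ without concluding that all of $Y_n$ is open), and the two-case conclusion is fine. What your approach buys is better reuse of the paper's own Proposition~\ref{p1} and the slightly stronger structural statement that every nonempty stratum $Y_n$ is open in $Y$; what the paper's approach buys is brevity and independence from the closedness of the levels $G_n(X)$.
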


\begin{proof}
Assume that $G(X)$ contains a copy of $Y$. Since $Y$ is a densely self-embeddable, it suffices to show that there exists a non-empty open set $U$ of $Y$ such that $U$ is contained in some $G_{n}(X)$. Pick an arbitrary point $y\in Y$. Then $y$ is a $q$-point in $Y$, and hence there exists a sequence $\{U_{n}: n\in\N\}$ of open neighborhoods at $y$ in $Y$ such that $\{U_{n}: n\in\N\}$ satisfies the condition ($\blacklozenge$). We claim that $U_{k}\subseteq G_{n}(X)$ for some $k, n\in\mathbb{N}$. Assume on the contrary that $U_{k}\not\subseteq G_{n}(X)$ for any $k, n\in\mathbb{N}$. Then it is easy to see that there exist a countably compact subset $K$ and a sequence $\{y_{n}: n\in\mathbb{N}\}$ in $Y$ such that $\{y_{n}: n\in\mathbb{N}\}\subseteq K$ and $y_{n}\not\in G_{n}(X)$ for each $n\in\mathbb{N}$. However, it follows from \cite[Corollary 7.4.4]{AT2008} that $K$ is contained in some $G_{n}(X)$, which is a contradiction. Therefore, $U_{k}\subseteq G_{n}(X)$ for some $k, n\in\mathbb{N}$, hence $G_{n}(X)$ contains a copy of
$Y$.
\end{proof}

Now we can show the second main theorem in this section, which generalizes a result in \cite{Y1995}. Indeed, K. Eda, H. Ohta and K. Yamada showed that, for each densely self-embeddable and prime space which is either compact
or first countable, $X$ contains a copy of $P$ iff  $F(X)$ contains a copy of $P$ iff $A(X)$ contains a copy of $P$. By the implications of the graph ({\bf A}), we see that each compact or first-countable space is a $q$-space.

\begin{theorem}\label{dense q}
Let $P$ be a densely self-embeddable, prime $q$-space. For a space $X$, the following are equivalent:
\begin{enumerate}
\item $X$ contains a copy of $P$.

\item $F(X)$ contains a copy of $P$.

\item $A(X)$ contains a copy of $P$.
\end{enumerate}
\end{theorem}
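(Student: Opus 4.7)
The implications $(1)\Rightarrow(2)$ and $(1)\Rightarrow(3)$ are immediate because $X$ embeds canonically as a subspace of $G(X)$, so the content is in the converses, and I would prove them in parallel by assuming $G(X)$ contains a homeomorphic copy $Y$ of $P$ and producing a copy of $P$ in $X$. The plan is to descend from $G(X)$ first to some $G_n(X)$, then to a fixed stratum $G_m(X)\setminus G_{m-1}(X)$, then to lift a copy of $P$ to the product $X^m$, and finally to apply primeness of $P$. Since $P$ is a densely self-embeddable $q$-space, Lemma~\ref{self-embeddable} lets me assume $Y\subseteq G_n(X)$ for some $n$; applying Proposition~\ref{p1} to the $q$-subspace $Y$ shows that the family $\{(G_{k+1}(X)\setminus G_k(X))\cap Y : k\in\N\}$ is discrete in $Y$, so every point $y\in Y$ has an open neighborhood in $Y$ contained in $G_m(X)\setminus G_{m-1}(X)$, where $m$ is the reduced length of $y$. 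Dense self-embeddability then lets me assume outright that $Y\subseteq G_m(X)\setminus G_{m-1}(X)$ for a single $m$.

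The geometric heart of the argument is to lift $Y$ through the natural map $i_m$. For the free case $G=F$, every element of $F_m(X)\setminus F_{m-1}(X)$ has a unique reduced form $x_1^{\varepsilon_1}\cdots x_m^{\varepsilon_m}$ with sign pattern $\varepsilon\in\{+1,-1\}^m$; I let $U_\varepsilon$ be the open subset of $X^m$ consisting of those tuples $(x_1,\dots,x_m)$ whose associated word is reduced of pattern $\varepsilon$ (so one excludes $x_i=x_{i+1}$ whenever $\varepsilon_i\neq\varepsilon_{i+1}$). A classical fact from the structure theory of free topological groups, recorded in \cite{AT2008}, asserts that $i_m\uhr U_\varepsilon$ is a homeomorphism of $U_\varepsilon$ onto its image, and that each such image is open in $F_m(X)\setminus F_{m-1}(X)$. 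The finite collection $\{Y\cap i_m(U_\varepsilon)\}_\varepsilon$ is then a clopen cover of $Y$, and dense self-embeddability places a copy of $P$ inside a single $i_m(U_\varepsilon)$, which in turn lifts through $i_m$ to a copy of $P$ inside $U_\varepsilon\subseteq X^m$. The Abelian case is handled analogously, with the sign pattern replaced by a pair $(p,q)$ satisfying $p+q=m$ coming from the canonical form $\sum_{i=1}^{p} x_i-\sum_{j=1}^{q} y_j$ with disjoint sets of distinct letters $\{x_i\}$ and $\{y_j\}$.

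Once a copy of $P$ is secured inside $X^m$, primeness of $P$ applied to the splitting $X^m=X\times X^{m-1}$ and iterated $m-1$ times produces a copy of $P$ in $X$, giving $(1)$. I expect the main obstacle to be verifying the classical fact that $i_m\uhr U_\varepsilon$ is a homeomorphism onto an open subset of $F_m(X)\setminus F_{m-1}(X)$, and the analogous statement for $A(X)$; this is what makes the whole lifting step go through, and it must be carefully quoted or re-derived from the structure theory of $G_m(X)$ presented in \cite{AT2008}.
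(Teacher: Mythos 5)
Your treatment of the free (non-Abelian) case is essentially the paper's argument, carried out in more detail: reduce to some $F_n(X)$ via Lemma~\ref{self-embeddable}, use dense self-embeddability to land in a single stratum $F_m(X)\setminus F_{m-1}(X)$ (the paper does this more cheaply by taking $n$ least, so that $Y\setminus F_{n-1}(X)$ is a nonempty open subset of $Y$, rather than invoking Proposition~\ref{p1}), identify that stratum with a subspace of $(X\oplus X^{-1})^m$ via the injectivity of $i_m$ on reduced words of length exactly $m$ together with its being a local homeomorphism off $i_m^{\leftarrow}(F_{m-1}(X))$, and finish with primeness plus one more application of dense self-embeddability to pass from $X\oplus X^{-1}$ to $X$. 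That part is correct.

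The gap is in the Abelian case, where you assert the lifting works ``analogously'' with the sign pattern replaced by a pair $(p,q)$. It does not: even on tuples in canonical form with pairwise distinct letters, the map $i_m$ is not injective, since $x_1+x_2-y_1$ and $x_2+x_1-y_1$ are the same element of $A(X)$. The restriction of $i_m$ to your set $U_{(p,q)}$ is a $p!\,q!$-to-one map, so $A_m(X)\setminus A_{m-1}(X)$ is homeomorphic to a subspace of the finite \emph{symmetric} product $(X\oplus -X)^m/m!$, not to a subspace of $X^m$. Primeness of $P$, as defined, only controls binary (hence finite) topological products, and gives nothing directly about symmetric products; a quotient of $X^m$ by a permutation action can in principle contain copies of spaces that $X^m$ itself does not obviously yield. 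This is exactly why the paper invokes \cite[Proposition 2.1]{Y1995}, which is the nontrivial result transferring a copy of a prime, densely self-embeddable $P$ from a finite symmetric product of $X$ down to $X$ itself. Your proposal needs either that citation or an independent proof of that fact; as written, the step from $A_m(X)\setminus A_{m-1}(X)$ to $X^m$ fails.
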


\begin{proof}
Clearly, it suffices to show that (2) $\Rightarrow$ (1) and (3) $\Rightarrow$ (1).

(2) $\Rightarrow$ (1). Assume that $F(X)$ contains a copy of $P$. By Lemma~\ref{self-embeddable}, some $F_{n}(X)$ contains a copy of
$P$. Without loss of generality, we may assume that $n$ is the least number such that $F_{n}(X)$ contains a copy of $P.$
Since $P$ is densely self-embeddable, $F_{n}(X)\setminus F_{n-1}(X)$ contains a copy of $P.$  Further, we may assume that $P$ is a subspace of $F_{n}(X)\setminus F_{n-1}(X)$. Then $(X\bigoplus X^{-1})^{n}$ contains a copy of
$P$ since $F_{n}(X)\setminus F_{n-1}(X)$ is homeomorphic
to a subspace of $(X\bigoplus X^{-1})^{n}$. Since $P$ is
prime and densely self-embeddable, $X$ contains a copy of $P$.

(3) $\Rightarrow$ (1). Similar to (2)$\Rightarrow$(1), one see that $A_{n}(X)\setminus A_{n-1}(X)$ contains a copy of
$P$ for some least number $n\in\mathbb{N}$. It is well known that $A_{n}(X)\setminus A_{n-1}(X)$ is homeomorphic
to a subspace of finite symmetric products $(X\bigoplus X^{-1})^{n}/ n!$ (see \cite{Y1995}). It follows from \cite[Proposition 2.1]{Y1995} that $X$ itself contains a copy of $P$.
\end{proof}

\maketitle
\section{The topological properties of $q$-spaces in $F_{n}(X)$}
In this section, we shall give some topological properties for a space $X$ whenever $G_{n}(X)$ is a $q$-space or $r$-space. First, we recall a theorem which was proved in \cite{Y1998}.

Let $X$ be a space, and let $\mathscr{U}_{X}$ be the finest uniformity on $X$ compatible with the topology of $X$. For each $U\in \mathscr{U}_{X}$, put $$O_{2}(U):=\{x^{\varepsilon}y^{-\varepsilon}: (x, y)\in U, \varepsilon=\pm 1\}.$$ In \cite{Y1998}, K. Yamada obtained the following theorem.

\begin{theorem}\cite{Y1998}\label{neighborhood base}
The family $\{O_{2}(U): U\in\mathscr{U}_{X}\}$ is a neighbourhood base at $e$ in $F_{2}(X)$.
\end{theorem}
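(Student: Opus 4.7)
The plan is to verify the two conditions for a neighbourhood base: first that each $O_{2}(U)$ is indeed a neighbourhood of $e$ in $F_{2}(X)$, and second that every open neighbourhood of $e$ in $F_{2}(X)$ contains some $O_{2}(U)$.

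For the first direction, I would invoke Graev's extension theorem: every continuous pseudometric $d$ on the basis $X$ (together with a chosen point used as a ``zero'') extends canonically to a two-sided invariant continuous pseudometric $\widehat d$ on $F(X)$ whose restriction to $X$ coincides with $d$. Given $U\in\mathscr{U}_{X}$, pick a continuous pseudometric $d$ on $X$ with $\{(x,y):d(x,y)<1\}\subseteq U$, and form $\widehat d$. The set $V:=\{g\in F(X):\widehat d(e,g)<1\}$ is open in $F(X)$, and the key claim is that for a reduced word $g\in F_{2}(X)\setminus\{e\}$ of the form $x^{\varepsilon}y^{-\varepsilon}$ one has $\widehat d(e,x^{\varepsilon}y^{-\varepsilon})=d(x,y)$, while for a single-letter word $x^{\varepsilon}$ one has $\widehat d(e,x^{\varepsilon})=d(x,x)=0<1$ (which still lies in $O_{2}(U)$, taking $x=y$). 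Consequently $V\cap F_{2}(X)\subseteq O_{2}(U)$, and $O_{2}(U)$ contains a neighbourhood of $e$ in $F_{2}(X)$.

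For the second direction, let $W$ be an open neighbourhood of $e$ in $F_{2}(X)$, and choose an open $W'\subseteq F(X)$ with $W'\cap F_{2}(X)=W$. By the Graev--Arhangelskii description of the topology on $F(X)$, the family $\{\{g\in F(X):\widehat d(e,g)<\varepsilon\}:d\in\mathcal P(X),\,\varepsilon>0\}$, where $\mathcal P(X)$ is the collection of continuous pseudometrics on $X$, is a neighbourhood base at $e$ in $F(X)$. Pick $d$ and $\varepsilon>0$ so that the corresponding ball is contained in $W'$. Then $U_{d,\varepsilon}:=\{(x,y)\in X\times X:d(x,y)<\varepsilon\}\in\mathscr{U}_{X}$, and using two-sided invariance of $\widehat d$ together with the identity $\widehat d(e,x^{\varepsilon}y^{-\varepsilon})=d(x,y)$ from the previous paragraph, every element of $O_{2}(U_{d,\varepsilon})$ lies in the $\widehat d$-ball, hence in $W$.

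The main obstacle is the explicit computation $\widehat d(e,x^{\varepsilon}y^{-\varepsilon})=d(x,y)$. The Graev pseudometric is defined by an infimum over all ``schemes'' that jointly reduce the word, and one must argue that for a word of length exactly two no such scheme produces a value strictly smaller than $d(x,y)$; this comes down to Graev's structural lemma that $\widehat d$ is the \emph{largest} invariant continuous pseudometric on $F(X)$ extending $d$, so $\widehat d\restriction X=d$ forces the two-letter case to be rigid. The rest of the argument is routine bookkeeping with invariance and continuity of $\widehat d$.
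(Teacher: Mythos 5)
The paper does not actually prove this statement; it is quoted from Yamada \cite{Y1998}, so there is no internal proof to compare against, and your argument must stand on its own. It has two genuine problems. The more serious one is in your second direction: you assert that the balls $\{g\in F(X):\widehat d(e,g)<\varepsilon\}$, for $d$ ranging over continuous pseudometrics on $X$, form a neighbourhood base at $e$ in $F(X)$. That is true for $A(X)$ but false for the non-abelian group $F(X)$: these balls are invariant under conjugation, so if they formed a base then $F(X)$ would be balanced (SIN), which fails for most non-discrete $X$; the invariant pseudometrics generate a strictly coarser group topology in general. Indeed, the whole point of Yamada's theorem is that the invariant sets $O_{2}(U)$ do form a base once you cut down to $F_{2}(X)$, even though their counterparts do not upstairs in $F(X)$. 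A correct route for this direction is either the Tkachenko--Pestov graded description of neighbourhoods of $e$ (each basic neighbourhood $\bigcup_{n}\bigcup_{\sigma}O_{2}(U_{\sigma(1)})\cdots O_{2}(U_{\sigma(n)})$ contains $O_{2}(U_{1})$ as its first level), or the Nummela--Pestov theorem that the right uniformity of $F(X)$ induces exactly $\mathscr{U}_{X}$ on $X$, which makes $\{(x,y): x^{\varepsilon}y^{-\varepsilon}\in W \ \text{for}\ \varepsilon=\pm 1\}$ a member of $\mathscr{U}_{X}$ for every neighbourhood $W$ of $e$.

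The second problem is in your first direction. The identity $\widehat d(e,x^{\varepsilon})=d(x,x)=0$ is wrong: $\widehat d(e,x^{\varepsilon})=d(x^{\varepsilon},e)$, and in the Markov setting one must first extend $d$ from $X$ to $X\cup X^{-1}\cup\{e\}$, conventionally with $d(x^{\pm 1},e)=1$, precisely so that odd-length words stay away from $e$. Moreover $x^{\varepsilon}$ is \emph{not} an element of $O_{2}(U)$ (taking $x=y$ in $x^{\varepsilon}y^{-\varepsilon}$ yields $e$, not $x^{\varepsilon}$); if your value $0$ were correct, every single letter would lie in the ball $V$ and the inclusion $V\cap F_{2}(X)\subseteq O_{2}(U)$ would fail. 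You also need to exclude the length-two words $x^{\varepsilon}y^{\varepsilon}$ with equal exponent signs, which you never mention; with the convention above one checks $\widehat d(e,x^{\varepsilon}y^{\varepsilon})\ge 2$ and $\widehat d(e,x^{\varepsilon})=1$, after which $V\cap F_{2}(X)\subseteq O_{2}(U)$ does hold and this half of the argument, together with Graev's computation $\widehat d(e,x^{\varepsilon}y^{-\varepsilon})=d(x,y)$, goes through.
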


Now we can give a topological property for a space $X$ when $F_{2}(X)$ is a $q$-space, which is similar to that in \cite[Lemma 2.1]{T2003}.

\begin{proposition}\label{p0}
If $F_{2}(X)$ is a $q$-space, then the derived set $X^{\prime}$ is bounded in $X$.
\end{proposition}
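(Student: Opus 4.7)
The plan is to argue by contradiction, supposing that $F_{2}(X)$ is a $q$-space while $X'$ is unbounded in $X$. From the unboundedness one first extracts, in the standard Tychonoff way (pick $f\in C(X)$ with $f|_{X'}$ unbounded, pass to a subsequence where the values of $f$ are widely spaced, and cut out thin $f$-level strips), a discrete family $\{U_{n}:n\in\N\}$ of open sets in $X$ together with points $x_{n}\in U_{n}\cap X'$; then $\{x_{n}:n\in\N\}$ is closed and discrete in $X$.

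Using the $q$-space hypothesis, $e$ is a $q$-point of $F_{2}(X)$, so fix a $q$-sequence $\{W_{n}:n\in\N\}$ at $e$. Theorem~\ref{neighborhood base} lets me pick, for each $n$, an entourage $V_{n}\in\mathscr{U}_{X}$ with $O_{2}(V_{n})\subseteq W_{n}$; since $\mathscr{U}_{X}$ generates the topology, I may shrink $V_{n}$ so that additionally $V_{n}[x_{n}]\subseteq U_{n}$. Non-isolation of $x_{n}$ provides $y_{n}\in V_{n}[x_{n}]\setminus\{x_{n}\}$, and then $y_{n}\in U_{n}$, $(x_{n},y_{n})\in V_{n}$, so that $x_{n}y_{n}^{-1}\in O_{2}(V_{n})\subseteq W_{n}$ for every $n$.

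By the $q$-sequence property, $\{x_{n}y_{n}^{-1}:n\in\N\}$ is contained in some countably compact $K\subseteq F_{2}(X)$. I now design a continuous $f\colon X\to\R$ to detect the unboundedness of this set. For each $n$, the closed set $\{y_{n}\}\cup(X\setminus U_{n})$ misses $x_{n}$, so by the Tychonoff property there is $h_{n}\colon X\to[0,1]$ with $h_{n}(x_{n})=1$, $h_{n}(y_{n})=0$, and $h_{n}\equiv 0$ outside $U_{n}$. Discreteness of $\{U_{n}\}$ makes $f:=\sum_{n\in\N} n\,h_{n}$ continuous, with $f(x_{n})=n$ and $f(y_{n})=0$.

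The universal property of the free topological group extends $f$ to a continuous homomorphism $\bar{f}\colon F(X)\to\R$, and one computes $\bar{f}(x_{n}y_{n}^{-1})=f(x_{n})-f(y_{n})=n$. Hence $\bar{f}(K)\supseteq\N$ is unbounded, yet $\bar{f}(K)$ must be countably compact, and therefore bounded, in $\R$ — the desired contradiction. The main subtlety lies in the order of the quantifiers: the entourages $V_{n}$ (and thus the witnesses $y_{n}$) are forced on us by the $q$-space hypothesis, and $f$ must be custom-built afterwards so that it separates each prescribed pair $(x_{n},y_{n})$ by exactly the amount $n$, making $\bar{f}$ able to witness the failure of countable compactness.
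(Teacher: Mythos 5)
Your argument is correct, and it diverges from the paper's proof at the decisive step. Both proofs begin identically: assume $X'$ is unbounded, extract a discrete family $\{U_n\}$ of open sets meeting $X'$, pick non-isolated $x_n\in U_n$, and use the $q$-sequence at $e$ together with Yamada's description of the neighbourhood base (Theorem~\ref{neighborhood base}) to produce companions $y_n$ with the length-two words $x_n^{\varepsilon}y_n^{-\varepsilon}$ landing in the prescribed neighbourhoods, hence inside a single countably compact set $K$. The paper then derives the contradiction \emph{internally}: it proves, via three subclaims (no cluster points in $X\cup X^{-1}$, none at $e$ by exhibiting an entourage avoiding all pairs $(x_n,y_n)$, none in $F_2(X)\setminus F_1(X)$ by the local homeomorphism $i_2$), that $P=\{x_n^{-1}y_n\}$ is an infinite closed discrete subset of $F_2(X)$, which cannot lie in a countably compact set. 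You instead derive it \emph{externally}: you build a continuous $f:X\to\R$ with $f(x_n)=n$, $f(y_n)=0$, extend it by the universal property of $F(X)$ to a continuous homomorphism into $(\R,+)$, and observe that the image of $K$ would be an unbounded countably compact (hence pseudocompact, hence bounded) subset of $\R$. Your route buys a cleaner finish --- it avoids the stratified cluster-point analysis and the local-homeomorphism property of $i_2$ entirely, at the cost of invoking the Markov universal property; the paper's route yields the extra information that $P$ is closed and discrete in $F_2(X)$, which is the more ``structural'' fact. The details you rely on all check out: a discrete open family is pairwise disjoint, so $\sum_n n\,h_n$ is locally a single term and continuous, and the shrinking of the entourages $V_n$ so that $V_n[x_n]\subseteq U_n$ is legitimate since $\mathscr{U}_X$ is compatible with the topology and closed under finite intersection.
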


\begin{proof}
Since $F_{2}(X)$ is a $q$-space, there exists a sequence $\{U_{n}: n\in\N\}$ of open neighborhoods at $e$ in $F_{2}(X)$ such that $\{U_{n}: n\in\N\}$ satisfies ($\blacklozenge$). Suppose that $X^{\prime}$ is not bounded in $X$. Then there exists in $X$ a discrete family of open subsets $\{V_{n}: n\in\N\}$ each of which intersects $X^{\prime}$. Choose a point $x_{n}\in V_{n}\cap X^{\prime}$ for each $n\in\N$. Since each $x_{n}\in X^{\prime}$, we can find  $y_{n}\in V_{n}$ such that $x_{n}\neq y_{n}$ and $x_{n}^{-1}y_{n}\in U_{n}$ for each $n\in\N$. It is easy to see that $x_{n}^{-1}y_{n}\neq x_{m}^{-1}y_{m}$ for distinct $m, n\in\N$. Set $P:=\{x_{n}^{-1}y_{n}: n\in\N\}$.

{\bf Claim:} The set $P$ is closed and discrete in $F_{2}(X)$.

  \medskip
Indeed, it suffices to show that each point in $F_{2}(X)$ is not a cluster point of $P$.

  \medskip
{\bf Subclaim 1:} Each point in $X\cup X^{-1}$ is not a cluster point of $P$.

  \medskip
Obviously, all elements of $P$ have length 2. Since the set $X\cup X^{-1}$ is open in $F_{2}(X)$, the set $P$ has no cluster points in $X\cup X^{-1}$.

  \medskip
{\bf Subclaim 2:} The point $e$ is not a cluster point of $P$.

  \medskip
Since the family $\{V_{n}: n\in\N\}$ is discrete in $X$ and $x_{n}$ and $y_{n}$ are distinct points in $V_{n}$, it follows from \cite[8.1 (c)]{E1989} that the set $$U:=(X\times X)\setminus\{(x_{n}, y_{n}), (y_{n}, x_{n}): n\in\N\}$$ belongs to the finest uniformity on $X$. Then it follows from Theorem~\ref{neighborhood base} that the set $O_{2}(U)$ is an open neighborhood at $e$ in $F_{2}(X)$. Obviously, $O_{2}(U)\cap P=\emptyset$. Therefore, $e$ is not a cluster point of $P$.

  \medskip
{\bf Subclaim 3:} Each point in $F_{2}(X)\setminus F_{1}(X)$ is not a cluster point of $P$.

  \medskip
Clearly, the natural mapping $i_{2}: (X\bigoplus X^{-1}\bigoplus\{e\})^{2}\rightarrow F_{2}(X)$ is a local homeomorphism from $(X\bigoplus X^{-1}\bigoplus\{e\})^{2}$ to $F_{2}(X)$ at each point of $(X\bigoplus X^{-1}\bigoplus\{e\})^{2}\setminus i_{2}^{\leftarrow}(e)$. Moreover, the set $L:=\{(x_{n}^{-1}, y_{n}): n\in\N\}$ is closed and discrete in $(X\bigoplus X^{-1}\bigoplus\{e\})^{2}$, which shows that the set $P=j_{2}(L)$ has no cluster points in $F_{2}(X)\setminus F_{1}(X)$.

Therefore, the claim is verified.

However, $F_{2}(X)$ is a $q$-space, then $P$ has a cluster point in $F_{2}(X)$ since each $x_{n}^{-1}y_{n}\in U_{n}$, which is a contradiction.

Therefore, the derived set $X^{\prime}$ is bounded in $X$.
\end{proof}

By Proposition~\ref{p0}, the following two corollaries are obvious.

\begin{corollary}
If $X$ is a homogeneous space and $F_{2}(X)$ is a $q$-space, then $X$ is pseudocompact or discrete.
\end{corollary}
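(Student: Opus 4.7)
The plan is to reduce the corollary to Proposition~\ref{p0} by means of the standard dichotomy that homogeneity forces on the set of isolated points of $X$. There are essentially two steps: first use homogeneity to split into the discrete case and the case $X' = X$, and then read off pseudocompactness directly from the boundedness of the derived set.

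For the first step, I would observe that any self-homeomorphism of $X$ must send isolated points to isolated points. Combined with homogeneity, this forces the set of isolated points of $X$ to be either empty or all of $X$. If all points of $X$ are isolated, then $X$ is discrete and the second alternative in the conclusion holds. Otherwise, no point of $X$ is isolated, so the derived set coincides with the whole space, $X' = X$.

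In the non-discrete case, Proposition~\ref{p0} applied to the hypothesis that $F_{2}(X)$ is a $q$-space yields that $X'$ is bounded in $X$. Since $X' = X$, the whole space $X$ is bounded in itself, which by the definition recalled in Section~2 means that every continuous real-valued function on $X$ is bounded. In the Tychonoff setting this is precisely pseudocompactness, so the first alternative in the conclusion holds.

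I do not foresee any substantive obstacle here; the argument is essentially a one-line deduction from Proposition~\ref{p0}. The only point requiring minor care is confirming that the notion of \emph{bounded subset} used in the paper, applied to $X$ as a subset of itself, agrees with pseudocompactness of $X$, which is a standard unwinding of the definitions for Tychonoff spaces.
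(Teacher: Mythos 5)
Your proposal is correct and is exactly the argument the paper has in mind (the paper simply declares the corollary ``obvious'' from Proposition~\ref{p0}): homogeneity forces the isolated points to be either all of $X$ or none, and in the latter case $X'=X$ is bounded in $X$, i.e.\ $X$ is pseudocompact. No gaps.
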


\begin{corollary}
If $F_{2}(X)$ is a $q$-space and $X$ is a $\mu$-space, then $X^{\prime}$ is compact.
\end{corollary}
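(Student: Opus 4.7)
The argument is essentially a one-line consequence of Proposition~\ref{p0} together with two elementary observations about the derived set in a Tychonoff space, so my plan is just to chain them together.

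First I would invoke Proposition~\ref{p0} directly: since $F_{2}(X)$ is a $q$-space, the derived set $X^{\prime}$ is bounded in $X$. Here \emph{bounded} means in the sense of Section~2, i.e.\ functionally bounded in $X$, which is the same notion appearing in the definition of a $\mu$-space, so the two hypotheses interact cleanly.

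Next I would check that $X^{\prime}$ is closed in $X$. This uses only that $X$ is Tychonoff, hence $T_{1}$: every isolated point $x$ of $X$ gives an open singleton $\{x\}$, so the set $X\setminus X^{\prime}$ of isolated points is a union of open sets and therefore open in $X$. Consequently $X^{\prime}$ is closed in $X$, so $\overline{X^{\prime}}=X^{\prime}$.

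Finally, I would apply the $\mu$-space hypothesis: the closure of every bounded subset of $X$ is compact in $X$. Applied to $X^{\prime}$ this yields that $\overline{X^{\prime}}$ is compact, and combined with $X^{\prime}=\overline{X^{\prime}}$ we conclude that $X^{\prime}$ itself is compact. There is no genuine obstacle in the argument; all the real work is absorbed into Proposition~\ref{p0}, and the $\mu$-space hypothesis is precisely the device that upgrades "bounded" to "relatively compact", while the Tychonoff hypothesis ensures that $X^{\prime}$ is already its own closure.
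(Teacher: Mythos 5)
Your argument is correct and is exactly the chain the paper has in mind (the paper merely declares the corollary ``obvious'' from Proposition~\ref{p0}): boundedness of $X^{\prime}$ from the proposition, closedness of $X^{\prime}$ because the set of isolated points is open, and the $\mu$-space hypothesis to turn bounded-and-closed into compact. Nothing further is needed.
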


Next, we shall consider the question that $X$ belongs to what kind of classes of spaces if $F_{4}(X)$ is a $q$-space. We first recall two concepts.

A space is called a {\it cf}-{\it space} if each compact subset of it is finite. Recall that a subspace $Y$ of a space $X$ is
  said to be {\it P-embedded in $X$} if each continuous pseudometric
  on $Y$ admits a continuous extensions over $X$.

\begin{theorem}\label{p5}
Let $X$ be a space. If $F_{4}(X)$ is a $q$-space, then $X$ is either pseudocompact or a $cf$-space.
\end{theorem}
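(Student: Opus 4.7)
The plan is to argue by contradiction: suppose $F_{4}(X)$ is a $q$-space while $X$ is neither pseudocompact nor a $cf$-space. Since $X$ is not pseudocompact, there is a discrete family $\{V_{n}: n\in\N\}$ of non-empty open subsets of $X$; choosing $a_{n}\in V_{n}$ yields a closed discrete subset $\{a_{n}: n\in\N\}$ which is unbounded in $X$ (via the standard Tychonoff construction of a continuous $f: X\to[0,\infty)$ with $f(a_{n})=n$). Since $X$ is not a $cf$-space, $X$ contains an infinite compact subset $K$; pick a non-isolated point $p\in K$ and a sequence $(c_{n})\subseteq K\setminus\{p\}$ with $c_{n}\to p$. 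Local finiteness of $\{V_{n}\}$ together with compactness of $K$ shows that only finitely many $V_{n}$ meet $K$, so after discarding those indices I may assume $V_{n}\cap K=\emptyset$ for every $n$; in particular the letters $a_{n}, p, c_{k}$ are pairwise distinct for all $n,k\in\N$.

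Next, apply the $q$-space hypothesis at $e\in F_{4}(X)$ to obtain a $q$-sequence $\{U_{n}: n\in\N\}$ of open neighborhoods of $e$ satisfying ($\blacklozenge$). For each $n$, the map $c\mapsto a_{n}p^{-1}ca_{n}^{-1}$ is continuous from $X$ into $F_{4}(X)$ and sends $p$ to $e$, so there exists $k_{n}\in\N$ with
\[
g_{n}:=a_{n}p^{-1}c_{k_{n}}a_{n}^{-1}\in U_{n}.
\]
Because $a_{n}, p, c_{k_{n}}$ are pairwise distinct in $X$, the word $g_{n}$ is reduced of length exactly $4$, so $g_{n}\in F_{4}(X)\setminus F_{3}(X)$. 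By ($\blacklozenge$), the set $\{g_{n}: n\in\N\}$ is contained in some countably compact subset $K'$ of $F_{4}(X)$. Note that the use of length $4$ (rather than $2$ or $3$) is essential: the outer conjugation by $a_{n}$ is what simultaneously places the unbounded letter $a_{n}$ into the word and keeps $g_{n}$ close to $e$, which would collapse to length $2$ in the abelian case and so is unavailable for $A_{4}(X)$.

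The remaining and principal task is to derive a contradiction from $\{g_{n}\}\subseteq K'$. The easy region is handled by the observation that $i_{4}$ restricts to a local homeomorphism at each reduced quadruple: a cluster point of $\{g_{n}\}$ in $F_{4}(X)\setminus F_{3}(X)$ would produce a cluster point of the tuples $(a_{n},p^{-1},c_{k_{n}},a_{n}^{-1})$ in $(X\oplus X^{-1}\oplus\{e\})^{4}$, which would then project to a cluster point of $\{a_{n}\}$ in $X$, contradicting closed discreteness. The delicate case, and the main obstacle, is a potential cluster point inside the closed set $F_{3}(X)$, where a limit could a priori erase the outer letters $a_{n},a_{n}^{-1}$ through non-local cancellation. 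I would overcome this by invoking the companion to \cite[Corollary 7.4.4]{AT2008} asserting that every countably compact (more generally, bounded) subset of $F(X)$ has all of its $X$-letters confined to a bounded subset of $X$: this immediately contradicts $\{a_{n}\}\subseteq X$ being unbounded. Should the lemma be unavailable in that form, the fallback is to push forward to $F(\R)$ through a continuous $f: X\to\R$ chosen to separate $\{a_{n}\}$ from $K$ and to distinguish $p$ from the $c_{k_{n}}$, arranging that $\bar{f}(g_{n})$ remains reduced of length $4$, and then derive the required boundedness contradiction in the simpler setting of $F(\R)$.
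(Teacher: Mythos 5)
Your construction is essentially the paper's: the words $g_{n}=a_{n}p^{-1}c\,a_{n}^{-1}$ are, up to inversion, exactly the conjugates forming the sets $C_{n}=y_{n}^{-1}x^{-1}Cy_{n}$ in the paper's proof, and the preliminary reduction to a discrete open family disjoint from the infinite compact set is the same. One minor flaw first: an infinite compact space need not contain a nontrivial convergent sequence (e.g.\ $\beta\N$), so the sequence $(c_{n})\to p$ you extract from $K$ may not exist. This is harmless, because for each $n$ you only need a single point $c\neq p$ lying in the preimage of $U_{n}$ under the continuous map $c\mapsto a_{n}p^{-1}ca_{n}^{-1}$, and that is supplied by $p$ being non-isolated; but as written the step is incorrect.

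The decisive problem is that the step you yourself call ``the main obstacle''---excluding a cluster point of $\{g_{n}\}$ in $F_{3}(X)$, equivalently showing $\{g_{n}\}$ cannot lie in a countably compact subset of $F_{4}(X)$---is never actually carried out. You defer it to an unstated ``companion'' of \cite[Corollary 7.4.4]{AT2008} on bounded supports which you neither formulate precisely nor verify, and the fallback through $F(\R)$ is itself only a sketch resting on further unproved facts (that countably compact subsets of $F(\R)$ are compact, and that compact subsets of $F(\R)$ have compact support). Since everything before this point is routine, the omitted step is the entire content of the theorem, so the proposal has a genuine gap. The paper closes precisely this gap by a localization device you are missing: it puts $Y:=C\cup\{y_{n}:n\in\N\}$, notes that $Y$ is closed, $\sigma$-compact and $P$-embedded in $X$, so that by \cite{S2000} the subgroup $F(Y,X)$ is topologically $F(Y)$, a $k_{\omega}$-group whose topology is determined by the explicit compacta $X_{n}=K_{n}\cdots K_{n}$ ($n$ factors). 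Inside this $k_{\omega}$-space the set $Z=\bigcup_{n}C_{n}$ is closed and every compact subset of $Z$ lies in a finite union $\bigcup_{i\leq n}C_{i}$, whence $e$ fails to be a $q$-point of $Z$ and hence of $F_{4}(X)$. The same move would rescue your version: $\{a_{n}:n\in\N\}\cup K$ is a closed, $P$-embedded $k_{\omega}$-subspace of $X$, every compact subset of $F_{4}(\{a_{n}\}\cup K)$ meets $\{g_{n}\}$ in a finite set, so $\{g_{n}\}$ is closed and discrete there and therefore in $F_{4}(X)$, contradicting ($\blacklozenge$). Without such a reduction to a $k_{\omega}$-subgroup, or an actual proof of the bounded-support lemma you invoke, the argument is incomplete.
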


\begin{proof}
Suppose $X$ is not a $cf$-space. Then there exists an infinite compact subset $C$ in $X$.
Next we shall show that $X$ is pseudocompact.

Assume on the contrary that we can find a countably infinite discrete family of open sets $\{U_{n}: n\in\N\}$ in $X$. It easily check that the family of $\{\overline{U_{n}}: n\in\N\}$ is also discrete in $X$, and hence $\bigcup_{n\in\N}\overline{U_{n}}$ is closed in $X$. Since the set $C$ is compact, it can intersect at most a finite number of the sets $\overline{U_{n}}$. Without loss of generality, we may assume that $C\cap \bigcup_{n\in\N}\overline{U_{n}}=\emptyset$. Now the family $\{C\}\cup\{U_{n}: n\in\N\}$ is discrete in $X$.

Since $C$ is an infinite compact subset in $X$, there exists a non-isolated point $x$ in $C$. For each $n\in\N$, pick $y_{n}\in U_{n}$, and then put $C_{n}:=y_{n}^{-1}x^{-1}C y_{n}$. Let $$Y:=C\cup\{y_{n}: n\in\N\}\ \mbox{and}\ Z:=\bigcup_{n\in\N} C_{n}.$$

Obviously, $Y$ is closed, $\sigma$-compact, non-discrete and $P$-embedded in $X$. By \cite{S2000}, the subgroup $F(Y, X)$ of $F(X)$ generated  by $Y$ is naturally topologically isomorphic to the free topological group $F(Y)$. Moreover, since $Y$ is a $k_{\omega}$-space, it follows from \cite{AT2008} that the free group $F(Y)$ is also a $k_{\omega}$-space, hence $F_{4}(Y)$ is also a $k_{\omega}$-space. Next, we claim that $Z$ is closed in $F_{4}(Y)$.

Indeed, for each $n\in \mathbb{N}$, let $$K_{n}:=C\cup C^{-1}\cup\{y_{i}, y_{i}^{-1}: i\leq n\}\cup\{e\}$$and $$X_{n}:=\overbrace{K_{n}\cdot\ldots \cdot K_{n}}^{n}$$Then it follows from the proof of \cite[Theorem 7.4.1]{AT2008} that the topology of $F(Y)$ is determined by the family of compact subsets $\{X_{n}: n\in\mathbb{N}\}$. Hence the topology of $F_{4}(Y)$ is determined by the family $\{X_{n}\cap F_{4}(Y): n\in\mathbb{N}\}$. Then we have $Z\cap X_{n}\cap F_{4}(Y)=\bigcup_{i=1}^{n}C_{i}$ for each $n\in\mathbb{N}$, which shows that $Z$ is closed in $F_{4}(Y)$. Therefore, $Z$ is a $q$-space. Moreover, ones see that the topology of $Z$ is determined by the family $\{C_{n}: n\in\mathbb{N}\}$, which shows that $Z$ is not a $q$-space since any compact subset of $Z$ lies in a finite union $\bigcup_{i\leq n}C_{i}$ for some $n\in \mathbb{N}$. Hence $e$ is not a $q$-point in $F_{4}(X)$, which is a contradiction.
\end{proof}

\begin{remark}
Let $Z=X\bigoplus Y$, where $X$ is an infinite compact space and $Y$ is an infinite discrete space. Then $F_{4}(Z)$ is not a $q$-space by Theorem~\ref{p5}. By \cite{Y1998}, ones see that ``$F_{4}(X)$'' in Theorem~\ref{p5} cannot be replaced by ``$F_{3}(X)$''. Indeed, if $X$ is a compact metrizable space, then $F_{3}(Z)$ is metrizable \cite{Y1998}. However, $Z$ is neither pseudocompact nor a $cf$-space.
\end{remark}

It is natural to consider what conditions on a space $X$ can guarantee $X$ to be countably compact or discrete if $F_{4}(X)$ is a $q$-space. We first recall two concepts.

A topological space is said to be {\it collectionwise Hausdorff} if given any closed discrete collection of points in the topological space, there are pairwise disjoint open sets containing the points. A topological space $X$ is {\it weakly}-$k$ if and only if a subset $F$ of $X$ is closed
in $X$ if $F\cap C$ is finite for every compact subset $C$ in $X$. Obviously, we have the following two facts:

(a) Each $k$-space is a weakly-$k$ space.

(b) Each weakly-$k$ $cf$-space is discrete.

However, there exists a weakly-$k$ space which
is not a $k$-space, see \cite{R1976}.

Obviously, each collectionwise Hausdorff pseudocompact space is countably compact. Hence, we have the following corollary.

\begin{corollary}\label{cooo}
Let $X$ be a collectionwise Hausdorff space. If $F_{4}(X)$ is a $q$-space, then $X$ is either countably compact or a $cf$-space.
\end{corollary}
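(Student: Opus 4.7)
The plan is a short two-step reduction: combine Theorem~\ref{p5} with the observation, stated just before the corollary, that every collectionwise Hausdorff pseudocompact space is countably compact. Since $F_{4}(X)$ is a $q$-space, Theorem~\ref{p5} already forces $X$ to be either pseudocompact or a $cf$-space. In the $cf$-space alternative there is nothing further to prove, so it suffices to handle the case in which $X$ is pseudocompact. In that case, the collectionwise Hausdorff hypothesis upgrades pseudocompactness to countable compactness, giving the dichotomy stated in the corollary. This is essentially a one-line application once the two ingredients are in place.

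The only substantive step, and therefore also the main (if modest) obstacle, is justifying the auxiliary implication that a collectionwise Hausdorff pseudocompact space is countably compact. I would argue by contradiction: if $X$ failed to be countably compact, it would contain a countably infinite closed discrete subspace $\{x_{n}:n\in\N\}$; collectionwise Hausdorffness would then separate these points by a discrete family of pairwise disjoint open neighborhoods $V_{n}\ni x_{n}$; and since $X$ is Tychonoff, one can pick continuous bump functions $f_{n}\colon X\to[0,n]$ with $f_{n}(x_{n})=n$ and $f_{n}$ vanishing outside $V_{n}$. Their pointwise sum is a well-defined continuous (thanks to the local finiteness of $\{V_{n}\}$) real-valued function on $X$ which is unbounded at the points $x_{n}$, contradicting pseudocompactness. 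Once this auxiliary fact is granted, the corollary follows immediately by splitting on the two cases provided by Theorem~\ref{p5}.
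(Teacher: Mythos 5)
Your overall reduction is exactly the paper's: the published proof consists of the single remark that a collectionwise Hausdorff pseudocompact space is countably compact, followed by an appeal to Theorem~\ref{p5}, and your first paragraph reproduces that case split faithfully. The only substantive mathematics you add is a justification of the auxiliary remark, and that is where there is a genuine gap. The definition of collectionwise Hausdorff used in the paper provides only \emph{pairwise disjoint} open sets $V_{n}\ni x_{n}$; it does not provide a \emph{discrete}, or even locally finite, family, yet your argument asserts both (``a discrete family of pairwise disjoint open neighborhoods,'' and later continuity of the sum ``thanks to the local finiteness of $\{V_{n}\}$''). Without local finiteness the pointwise sum $\sum_{n}f_{n}$ need not be continuous: a point $y$ outside $\{x_{n}:n\in\N\}$ may lie in the closure of infinitely many $V_{n}$, and then $\sum_{n}f_{n}$ is unbounded on every neighbourhood of $y$ while vanishing at $y$. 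This is not a removable technicality. The Mr\'owka space $\Psi$ over a maximal almost disjoint family is pseudocompact and not countably compact, and every countably infinite closed discrete subset of it \emph{can} be separated by pairwise disjoint open sets; an argument using only pairwise disjointness would therefore ``prove'' that $\Psi$ fails to be pseudocompact, which is false. (The reason $\Psi$ does not refute the auxiliary fact itself is that it fails to be collectionwise Hausdorff at its \emph{uncountable} closed discrete sets.)

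To close the gap you must either work with the stronger form of collectionwise Hausdorffness in which the separating family is required to be discrete --- in which case no bump functions are needed at all, since an infinite discrete family of nonempty open sets already contradicts pseudocompactness (feeble compactness) of a Tychonoff space --- or else supply an argument that, under the pairwise-disjoint definition, the sets $V_{n}$ can be refined to a locally finite family, which does not follow from pairwise disjointness alone. The paper gives no such argument (it declares the implication ``obvious''), so at a minimum your write-up should state explicitly which version of collectionwise Hausdorff it is using and prove the implication for that version.
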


Further, we have the following Theorem~\ref{tco}, which is a more stronger form of Corollary~\ref{cooo}.

\begin{theorem}\label{tco}
Let $X$ be a collectionwise Hausdorff, weakly-$k$ and non-discrete space. If $F_{4}(X)$ is a $q$-space, then $X^{2}$ is countably compact.
\end{theorem}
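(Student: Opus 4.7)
The plan is to split the proof into two stages: first showing $X$ itself is countably compact, then bootstrapping to $X^{2}$.

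For the first stage, I invoke Corollary~\ref{cooo}: since $X$ is collectionwise Hausdorff and $F_{4}(X)$ is a $q$-space, $X$ is either countably compact or a $cf$-space. Because $X$ is weakly-$k$ and non-discrete, fact (b) above (every weakly-$k$ $cf$-space is discrete) rules out the $cf$-case, so $X$ must be countably compact.

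For the second stage, I argue by contradiction. Assume $X^{2}$ is not countably compact, and fix a countably infinite subset $D=\{(x_{n},y_{n}):n\in\N\}\subseteq X^{2}$ without any accumulation point. Since the diagonal $\Delta\cong X$ is countably compact and discreteness is preserved under passing to subsequences, I may assume the pairs are pairwise distinct and $x_{n}\neq y_{n}$ for all $n$. Set $w_{n}:=x_{n}y_{n}^{-1}\in F_{2}(X)\subseteq F_{4}(X)$. Using Theorem~\ref{neighborhood base} together with the fact that the natural mapping $i_{2}$ is a local homeomorphism off $i_{2}^{\leftarrow}(F_{1}(X))$, I would show that $\{w_{n}\}$ is closed and discrete in $F_{4}(X)$: a cluster of $\{w_{n}\}$ at a length-$2$ reduced word $cd^{-1}$ would (by the local homeomorphism at $(c,d^{-1})$) yield an accumulation $(c,d)$ of $D$ in $X^{2}$; a cluster at $e$ would force, via the neighborhood base $\{O_{2}(V)\}$, infinitely many pairs $(x_{n},y_{n})$ to land in each entourage $V$, which combined with countable compactness of $X$ gives an accumulation of $D$ on the diagonal; and a cluster at a length-$1$ element is incompatible with $w_{n}$ being a reduced length-$2$ word $x_{n}y_{n}^{-1}$ with $x_{n}\neq y_{n}$. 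Each of these contradicts the assumption that $D$ has no accumulation in $X^{2}$.

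The final step---which I expect to be the main obstacle, since being closed discrete is not in itself forbidden inside a $q$-space---is converting this into an actual contradiction with the $q$-property of $F_{4}(X)$. My approach follows the template of the proof of Theorem~\ref{p5}. Using countable compactness of $X$, pick cluster points $a$ of $\{x_{n}\}$ and $b$ of $\{y_{n}\}$, and build a closed $\sigma$-compact $P$-embedded subspace $Y\subseteq X$ containing the $x_{n}$, $y_{n}$, $a$ and $b$; by Sipacheva's embedding theorem~\cite{S2000}, $F(Y)$ is topologically a closed subgroup of $F(X)$, so $F_{4}(Y)$ is closed in $F_{4}(X)$ and inherits the $q$-space property. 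Then, exploiting that $F(Y)$ is a $k_{\omega}$-space and using the collectionwise Hausdorff hypothesis to separate the pairs $(x_{n},y_{n})$ by pairwise disjoint open rectangles $P_{n}\times Q_{n}\subseteq X^{2}$, construct a closed subspace $Z\subseteq F_{4}(Y)$ whose topology is determined by a countable family of pairwise disjoint compact sets $\{K_{n}:n\in\N\}$, engineered so that every compact subset of $Z$ lies in a finite union $\bigcup_{i\leq m}K_{i}$. This last property forces a distinguished base point of $Z$ (for example $e$) to fail to be a $q$-point of $Z$, and since $q$-spaces are closed-hereditary, this contradicts $F_{4}(X)$ being a $q$-space. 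The delicate task, where the weakly-$k$ and collectionwise Hausdorff hypotheses enter most crucially, is designing the compacta $K_{n}$ (in the spirit of the $C_{n}=y_{n}^{-1}x^{-1}Cy_{n}$ construction from Theorem~\ref{p5}) so that $Z$ is simultaneously closed in $F_{4}(Y)$, has its topology determined by $\{K_{n}\}$, and still records the obstruction inherent in $X^{2}$ failing to be countably compact.
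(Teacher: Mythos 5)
Your first stage is exactly the paper's argument and is fine: non-discrete plus weakly-$k$ rules out the $cf$-alternative of Corollary~\ref{cooo} (via fact (b)), so $X$ is pseudocompact by Theorem~\ref{p5}, hence countably compact by collectionwise Hausdorffness. The gap is entirely in your second stage. The paper does not prove $X^{2}$ countably compact by any free-group construction at all: it simply invokes Rishel's theorem from \cite{R1976}, which says that a countably compact weakly-$k$ space has countably compact product with every countably compact space (this is precisely where the weakly-$k$ hypothesis is consumed). Your proposal replaces that citation with a sketch whose decisive step --- ``designing the compacta $K_{n}$ so that $Z$ is closed in $F_{4}(Y)$, has its topology determined by $\{K_{n}\}$, and records the obstruction'' --- is explicitly left unconstructed. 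That is not a detail to be filled in routinely; it is the whole proof, and as written the statement has not been established.

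Moreover, there are concrete reasons to doubt the construction can be carried out along the lines of Theorem~\ref{p5}. That proof hinges on $X$ failing to be pseudocompact, which supplies an infinite discrete family of open sets $\{U_{n}\}$ in $X$; the sets $C_{n}=y_{n}^{-1}x^{-1}Cy_{n}$ are ``spread out'' precisely because the $y_{n}$ live in that discrete family. In your situation you have already proved $X$ is countably compact, so no infinite discrete family of open sets exists in $X$ and that template cannot be transplanted. Your appeal to collectionwise Hausdorffness to separate the pairs $(x_{n},y_{n})$ by disjoint rectangles is also misapplied: the hypothesis concerns closed discrete sets of points in $X$, whereas your set $D$ is closed discrete only in $X^{2}$, and $\{x_{n}\}$ cannot be an infinite closed discrete subset of the countably compact space $X$. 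The correct and short route is the paper's: once $X$ is countably compact and weakly-$k$, cite the product theorem of \cite{R1976}.
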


\begin{proof}
Since $X$ is a non-discrete weakly-$k$-space, we see that $X$ is not a $cf$-space, hence $X$ is pseudocompact by Theorem~\ref{p5}. Then $X$ is countably compact since it is collectionwise Hausdorff. Therefore, it follows from \cite{R1976} that $X^{2}$ is countably compact.
\end{proof}

If we replace ``$q$-space'' with ``$r$-space'' in Theorem~\ref{p5}, we can obtain a more stronger result.

\begin{corollary}\label{p2}
Let $X$ be a space. If $F_{4}(X)$ is a $r$-space, then $X$ is either pseudocompact or discrete.
\end{corollary}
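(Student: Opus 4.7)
The plan is to invoke Theorem~\ref{p5}: since every $r$-space is a $q$-space, the hypothesis that $F_{4}(X)$ is an $r$-space already forces $X$ to be pseudocompact or a $cf$-space. Pseudocompactness is one of the two alternatives in the conclusion, so the whole task reduces to showing that if $X$ is a $cf$-space and $F_{4}(X)$ is an $r$-space, then $X$ must be discrete. I argue by contradiction: assume $X$ has a non-isolated point $x_{0}$, and let $\{U_{n}: n\in \mathbb{N}\}$ be an $r$-sequence of open neighbourhoods of $e$ in $F_{4}(X)$.

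The next step is to produce infinitely many distinct elements of $F_{2}(X)$, one in each $U_{n}$. The map $(x,y)\mapsto xy^{-1}$ is continuous from $X\times X$ into $F_{2}(X)\subseteq F_{4}(X)$ and sends $(x_{0},x_{0})$ to $e$, so for each $n$ I can find an open neighbourhood $O_{n}$ of $x_{0}$ in $X$ with $\{uv^{-1}: u,v\in O_{n}\}\subseteq U_{n}$. Since $x_{0}$ is non-isolated, each $O_{n}$ is infinite, so a straightforward diagonal selection produces points $z_{n}\in O_{n}\setminus\{x_{0}\}$ that are pairwise distinct. Setting $g_{n}:=z_{n}x_{0}^{-1}$ yields an infinite family of pairwise distinct elements with $g_{n}\in U_{n}$ for every $n$.

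By the $r$-point property there is a compact set $K\subseteq F_{4}(X)$ with $\{g_{n}: n\in \mathbb{N}\}\subseteq K$. The crucial step is right-translation by $x_{0}$: the map $g\mapsto gx_{0}$ is continuous on $F(X)$, so $Kx_{0}$ is a compact subset of $F(X)$ that contains every $z_{n}=g_{n}x_{0}$. Because $X$ is closed in $F(X)$, the intersection $Kx_{0}\cap X$ is a compact subset of $X$, and it contains the infinite set $\{z_{n}: n\in \mathbb{N}\}$; but $X$ is a $cf$-space, so every compact subset of $X$ is finite, a contradiction. The main obstacle I anticipate is recognising that one has to step out of $F_{4}(X)$ into $F(X)$ to run the right-translation trick, since $Kx_{0}$ sits naturally in $F_{5}(X)$; the closedness of $X$ in $F(X)$ is exactly what lets the compactness land back inside $X$ where the $cf$-hypothesis can be exploited.
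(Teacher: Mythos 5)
Your proposal is correct, and its first step coincides with the paper's: both reduce the statement to Theorem~\ref{p5} (an $r$-space is a $q$-space, so $X$ is pseudocompact or a $cf$-space) and then must rule out a non-discrete $cf$-space. Where you diverge is in how that last step is carried out. The paper's proof is a one-liner: since $X$ is closed in $F(X)$, it is a closed subspace of the $r$-space $F_{4}(X)$ and hence itself an $r$-space; an $r$-sequence at a non-isolated point of a $T_{1}$ space immediately yields an infinite compact subset, so an $r$-space that is a $cf$-space is discrete. You instead work with an $r$-sequence at the identity $e$ of $F_{4}(X)$, manufacture words $z_{n}x_{0}^{-1}\in U_{n}$ near a hypothetical non-isolated point $x_{0}$, capture them in a compact $K$, and right-translate by $x_{0}$ to land an infinite compact set inside the closed subspace $X$. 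Your argument is valid — the continuity of $(x,y)\mapsto xy^{-1}$, the diagonal selection, and the closedness of $X$ in $F(X)$ all check out — but it uses the $r$-property only at $e$ at the cost of the translation detour, whereas the paper's route uses the hereditary nature of the $r$-property for closed subspaces and is both shorter and more transparent. Either way the corollary follows.
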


\begin{proof}
Suppose that $X$ is a $cf$-space, then $X$ is discrete since $X$ is a $r$-space. Therefore, the corollary holds by Theorem~\ref{p5}.
\end{proof}

By Corollary~\ref{p2}, we have the following theorem.

\begin{theorem}\label{t4}
The following conditions are equivalent for a $\mu$-space $X$:
\begin{enumerate}
\item $F_{4}(X)$ is a $r$-space.
\item $F_{4}(X)$ is of pointwise countable type.
\item $F_{4}(X)$ is locally compact.
\item $F_{n}(X)$ is of pointwise countable type for each $n\in\N$.
\item $F_{n}(X)$ is locally compact for each $n\in\N$.
\item The space $X$ is either compact or discrete.
\end{enumerate}
\end{theorem}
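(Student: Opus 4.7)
The plan is to run the cycle $(1) \Rightarrow (6) \Rightarrow (5) \Rightarrow (4) \Rightarrow (2) \Rightarrow (1)$ and attach the side implications $(5) \Rightarrow (3) \Rightarrow (1)$ using routine facts about local compactness together with the implication diagram (\textbf{A}) of Section~2. All the real work will be concentrated in one step.

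The one substantive implication is $(1) \Rightarrow (6)$, and it is here that both hypotheses are exploited. Given that $F_{4}(X)$ is an $r$-space, Corollary~\ref{p2} immediately yields that $X$ is either pseudocompact or discrete. In the pseudocompact case, every continuous real-valued function on $X$ is bounded, so $X$ is itself a bounded subset of $X$; the $\mu$-space hypothesis then forces $\overline{X} = X$ to be compact. Thus $X$ is compact or discrete, as required.

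For $(6) \Rightarrow (5)$ I split into the two cases provided by $(6)$. If $X$ is compact, then $(X\bigoplus X^{-1}\bigoplus \{e\})^{n}$ is compact and the natural continuous surjection $i_{n}$ onto $F_{n}(X)$ shows $F_{n}(X)$ is compact, hence locally compact. If $X$ is discrete, then by the classical theorem quoted in the introduction (\cite{D961}) $F(X)$ itself is locally compact; since $F_{n}(X)$ is closed in $F(X)$, it is locally compact as a closed subspace of a locally compact Hausdorff space.

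The remaining links are mechanical. $(5) \Rightarrow (3)$ is the specialization $n=4$; $(5) \Rightarrow (4)$ uses the standard fact that locally compact spaces are of pointwise countable type (a compact neighborhood of each point witnesses it); $(4) \Rightarrow (2)$ specializes to $n=4$; and $(2) \Rightarrow (1)$ is read off directly from diagram~(\textbf{A}), which tells us that pointwise countable type implies the $r$-space property. The only step I expect to be a genuine obstacle is $(1) \Rightarrow (6)$, but this has essentially been absorbed into Corollary~\ref{p2}, after which the $\mu$-space hypothesis does the one-line upgrade from ``pseudocompact'' to ``compact''. Consequently the whole theorem is really just a repackaging of Corollary~\ref{p2} in the $\mu$-space setting, together with the standard description of $F_{n}(X)$ when $X$ is compact or discrete.
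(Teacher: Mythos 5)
Your proof is correct, and its one substantive step --- $(1)\Rightarrow(6)$ via Corollary~\ref{p2} followed by the observation that a pseudocompact $\mu$-space is compact (since a pseudocompact space is bounded in itself) --- is exactly the paper's argument. Where you diverge is in the bookkeeping: the paper simply cites Nickolas--Tkachenko \cite{T2003} for the equivalence of (2), (3), (4), (5) and (implicitly) for the implication ``$X$ compact or discrete $\Rightarrow$ $F_4(X)$ of pointwise countable type'', whereas you close the cycle yourself by proving $(6)\Rightarrow(5)$ directly ($X$ compact gives $F_n(X)$ compact as the continuous image of $(X\bigoplus X^{-1}\bigoplus\{e\})^n$ under $i_n$; $X$ discrete gives $F(X)$ locally compact by \cite{D961} and hence each closed subspace $F_n(X)$ locally compact) and then descending through the implication diagram (\textbf{A}). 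Your route is more self-contained and makes the logical structure explicit, at the cost of re-proving easy facts the paper prefers to import; the paper's version is shorter but leaves the reader to reassemble how item (6) enters the equivalence. Both are valid; nothing is missing from yours.
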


\begin{proof}
For any space $X$, the equivalence of items (2), (3), (4) and (5) of the theorem were established in \cite{T2003}. Obviously, we have (5) $\Rightarrow$ (1). By Corollary~\ref{p2}, $X$ is pseudocompact or discrete. If $X$ is pseudocompact, it follows that $X$ is compact since it is a $\mu$-space. Hence (1) $\Rightarrow$ (2).
\end{proof}

\maketitle
\section{The topological properties of $\omega$-bounded spaces in $F_{2}(X)$}
In this section, we shall give some topological properties for a space $X$ when $G_{2}(X)$ is an $\omega$-bounded space. First, we give some topological properties of an $\omega$-bounded space.

Obviously, each compact space is $\omega$-bounded, and each $\omega$-bounded space is a countably compact $r$-space. Moreover, it is easy to see that $[0, \omega_{1})$ is an $\omega$-bounded and non-compact space. It is well known that there exists two countably compact spaces such that their product is not countably compact. However, it easily checked that the product of a family of $\omega$-bounded spaces is again $\omega$-bounded.

The following two propositions are obvious.

\begin{proposition}\label{p3}
Let $f: X\rightarrow Y$ be a continuous mapping. If $X$ is $\omega$-bounded, then $Y$ is also $\omega$-bounded.
\end{proposition}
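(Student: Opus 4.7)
The plan is to verify the property directly from the definition of $\omega$-boundedness. Assuming that $f$ is surjective (or, equivalently, replacing $Y$ by $f(X)$, which is the natural reading), I would start with an arbitrary countable subset $A \subseteq Y$ and aim to show that $\overline{A}$ is compact in $Y$.

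The first step is to lift $A$ to $X$: for each $a \in A$, choose a point $x_a \in f^{-1}(a)$, and let $B = \{x_a : a \in A\}$. Then $B$ is countable and $f(B) = A$. By the hypothesis that $X$ is $\omega$-bounded, the closure $\overline{B}$ is compact in $X$. The next step is to transport this compactness to $Y$ via continuity: $f(\overline{B})$ is a compact subset of $Y$, and since $Y$ is Hausdorff (all spaces in the paper are Tychonoff), $f(\overline{B})$ is closed in $Y$. Because $f(\overline{B}) \supseteq f(B) = A$, we obtain $f(\overline{B}) \supseteq \overline{A}$. Thus $\overline{A}$ is a closed subset of the compact set $f(\overline{B})$, hence compact. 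This exactly says that $Y$ is $\omega$-bounded.

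I do not foresee a real obstacle here; the argument is a direct mimicry of the standard proof that continuous images of compact (or countably compact) spaces retain the corresponding property. The only mildly subtle point worth flagging is that the statement should be interpreted with $f$ being a surjection onto $Y$ (or $Y$ being replaced by $f(X)$), since otherwise portions of $Y$ outside the image play no role in the hypothesis on $X$ and the conclusion would not follow.
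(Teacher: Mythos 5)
Your proof is correct and is exactly the direct, definitional argument that the paper omits, stating the proposition as ``obvious'': lift a countable set, use $\omega$-boundedness upstairs, and push the compact closure forward. Your remark that the statement must be read with $f$ surjective (or with $Y$ replaced by $f(X)$) is a fair and appropriate clarification, and is indeed how the paper uses the result (e.g.\ in Example~6.1, where $F_4(X)$ is a continuous image of a power of $X\bigoplus X^{-1}\bigoplus\{e\}$).
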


By \cite[Propositions 3.72 and 3.7.6]{E1989}, it is easy to show the following proposition.

\begin{proposition}\label{p333}
Let $f: X\rightarrow Y$ be a continuous perfect mapping. If $Y$ is $\omega$-bounded (resp. locally $\omega$-bounded), then $X$ is also $\omega$-bounded (locally $\omega$-bounded).
\end{proposition}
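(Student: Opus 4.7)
The plan is to handle the two statements in parallel, using the standard fact (Engelking 3.7.2) that a perfect mapping pulls compact sets back to compact sets, together with the easy observation that under a continuous map one has $f(\overline{A})\subseteq\overline{f(A)}$.

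First I would treat the global case. Let $A$ be any countable subset of $X$. Then $f(A)$ is a countable subset of $Y$, so by $\omega$-boundedness of $Y$ its closure $\overline{f(A)}$ is compact. Since $f$ is perfect, the cited Engelking result gives that $f^{-1}(\overline{f(A)})$ is a compact subset of $X$. Because $A\subseteq f^{-1}(f(A))\subseteq f^{-1}(\overline{f(A)})$ and the latter set is closed, $\overline{A}$ lies inside it, so $\overline{A}$ is a closed subset of a compact space, hence compact. This shows $X$ is $\omega$-bounded.

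For the local version, fix $x\in X$ and choose an open neighborhood $V$ of $f(x)$ in $Y$ such that every countable subset of $\overline{V}$ has compact closure in $\overline{V}$. Set $U:=f^{-1}(V)$, an open neighborhood of $x$ in $X$. The key point is that $f$ is continuous, so $f(\overline{U})\subseteq\overline{f(U)}\subseteq\overline{V}$; hence any countable $A\subseteq\overline{U}$ satisfies $f(A)\subseteq\overline{V}$. Then $\overline{f(A)}$ is compact (this closure lies in $\overline{V}$ because $\overline{V}$ is closed), so $f^{-1}(\overline{f(A)})$ is compact by perfectness, and again the closure of $A$ in $X$ (equivalently in $\overline{U}$) is closed in this compact set, hence compact. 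This verifies that $X$ is locally $\omega$-bounded at $x$.

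I do not expect a serious obstacle here: the only subtlety is the routine check that ``closure of a countable set inside $\overline{U}$'' coincides with the ambient closure, which is immediate because $\overline{U}$ is closed in $X$. The role of Engelking 3.7.6 (that the restriction of a perfect map to a closed set is perfect) is merely to make the local argument cleaner if one prefers to work intrinsically inside $\overline{U}$ and $\overline{V}$; it is not essential to the argument above.
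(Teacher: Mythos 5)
Your argument is correct and follows exactly the route the paper intends: the paper offers no details, merely citing Engelking's facts that perfect maps pull compact sets back to compact sets (3.7.2) and restrict well to closed subspaces (3.7.6), and your write-up fills in precisely those details. The one point worth checking --- that the closure of a countable set taken inside $\overline{U}$ (resp.\ $\overline{V}$) agrees with the ambient closure because these sets are closed --- is handled correctly.
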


\begin{proposition}\label{p55}
Let $X=Y\bigoplus D$ be the topological sum of a space $Y$ and a discrete space $D$. Then we have the following statements.

(a) $F_{2}(X)$ is a $r$-space iff $F_{2}(Y)$ is a $r$-space.

(b) $F_{2}(X)$ is a locally $\omega$-bounded space iff $F_{2}(Y)$ is a locally $\omega$-bounded space.

(c) For every $n\in\N$, $A_{n}(X)$ is a $q$-space (resp. $r$-space, locally $\omega$-bounded) iff $A_{n}(Y)$ is a $q$-space (resp. $r$-space, locally $\omega$-bounded).
\end{proposition}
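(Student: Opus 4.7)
My plan is to handle each direction separately, using different techniques for the Abelian and the free cases.

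\textbf{Forward directions.} Since $X = Y\oplus D$, the subspace $Y$ is $P$-embedded in $X$; hence the natural homomorphisms $F(Y)\to F(X)$ and $A(Y)\to A(X)$ are closed topological embeddings (see \cite{S2000}, \cite[Ch.~7]{AT2008}). Intersecting with the finite levels, $F_2(Y)=F(Y)\cap F_2(X)$ is closed in $F_2(X)$ and $A_n(Y)=A(Y)\cap A_n(X)$ is closed in $A_n(X)$. Each of the properties under discussion---being an $r$-space, a $q$-space, and being locally $\omega$-bounded---is inherited by closed subspaces (given a witnessing open neighbourhood together with the accompanying compact or countably compact set, intersect both with the closed subspace). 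This proves the $(\Rightarrow)$-directions of (a), (b), and (c).

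\textbf{Reverse direction of (c).} The structural fact I use is the topological group isomorphism $A(X)\cong A(Y)\times A(D)$, which follows from the continuity of the natural map $A(X)\to A(Y)\times A(D)$ arising from the clopen decomposition $X=Y\oplus D$ and the continuity of the restricted addition map $A(Y)\times A(D)\to A(X)$. Since $D$ is discrete, so is $A(D)$, and no $Y$-letter cancels against a $D$-letter, so the reduced length of $a+b\in A(Y)+A(D)$ equals $|a|+|b|$. Consequently
\begin{equation*}
A_n(X)\;\cong\;\bigoplus_{b\in A(D),\;|b|\le n}\,A_{n-|b|}(Y),
\end{equation*}
a topological sum of copies of $A_k(Y)$ with $0\le k\le n$. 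If $A_n(Y)$ has property $P$ (any of the three), each closed subspace $A_k(Y)$ with $k\le n$ also has $P$, and all three properties are preserved by arbitrary topological sums; hence $A_n(X)$ has $P$.

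\textbf{Reverse direction of (a), (b).} I decompose $F_2(X)\setminus\{e\}$ as a topological sum and verify the property pointwise. Using the local homeomorphism $i_2:(X\oplus X^{-1}\oplus\{e\})^2\to F_2(X)$ off $i_2^{\leftarrow}(e)$, together with a short cancellation analysis of the form $yO_2(U)\cap F_2(X)\subseteq Y$ for $y\in Y$, one checks that $F_2(Y)\setminus\{e\}$ is clopen in $F_2(X)\setminus\{e\}$, and that its complement is a topological sum of: copies of $Y\cup Y^{-1}$ indexed by elements of $D\cup D^{-1}$ (from the mixed length-$2$ words $Y^*D^*$ and $D^*Y^*$), plus the discrete pieces $D\cup D^{-1}$ and $F_2(D)\setminus F_1(D)$. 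Since $Y\cup Y^{-1}$ is open in $F_2(Y)$, every summand inherits property $P$ from $F_2(Y)$ (or trivially in the discrete cases); as topological sums preserve both being an $r$-space and being locally $\omega$-bounded, $F_2(X)\setminus\{e\}$ has $P$ at each of its points.

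The main obstacle is the neutral element $e$, because every $F_2(X)$-neighbourhood of $e$ inherently meets elements involving $D$-letters. To overcome this, I invoke Theorem~\ref{neighborhood base} together with the observation that the finest uniformity $\mathscr{U}_X$ contains entourages of the form $U=\Delta_D\cup U'$ with $U'\in\mathscr{U}_Y$: such $U$ is produced by the open cover of $X$ consisting of all singletons $\{d\}\subseteq D$ together with a suitable refinement of an open cover of $Y$ witnessing $U'$. A direct computation gives $O_2(U)=O_2(U')\subseteq F_2(Y)$, so every open $F_2(X)$-neighbourhood of $e$ is refined by one lying entirely inside the closed subspace $F_2(Y)$. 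Consequently, an $r$-sequence (resp.\ a locally $\omega$-bounded neighbourhood) of $e$ in $F_2(Y)$ produces one of the same type at $e$ in $F_2(X)$, since the witnessing compact or countably compact sets automatically sit inside $F_2(Y)\subseteq F_2(X)$.
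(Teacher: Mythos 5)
Your proposal is correct, and at the decisive point it coincides with the paper's own argument: both proofs reduce everything to the neutral element and handle it by Theorem~\ref{neighborhood base}, padding an entourage $W\in\mathscr{U}_Y$ to $W\cup\{(d,d):d\in D\}\in\mathscr{U}_X$ and observing that $O_{2}$ of the padded entourage equals $O_{2}(W)\subseteq F_{2}(Y)$, so the witnessing (countably) compact sets live in the closed copy of $F_{2}(Y)$. The forward directions (closed copy of $F_{2}(Y)$ in $F_{2}(X)$ via the retraction/$P$-embedding) and the idea behind (c) (namely $A(X)\cong A(Y)\times A(D)$ with $A(D)$ discrete) are likewise the same; your explicit clopen decomposition $A_{n}(X)\cong\bigoplus_{b\in A_{n}(D)}\bigl(A_{n-|b|}(Y)+b\bigr)$ is just a sharper version of the paper's remark that $A_{n}(X)$ sits as a closed subspace of $A_{n}(Y)\times A_{n}(D)$. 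The one genuine divergence is at the non-identity points of $F_{2}(X)$ in (a) and (b): the paper argues that $X$, hence $(X\oplus X^{-1}\oplus\{e\})^{2}$, is an $r$-space and transfers the property through the local homeomorphism $i_{2}$, whereas you exhibit $F_{2}(X)\setminus\{e\}$ as an explicit topological sum of $F_{2}(Y)\setminus\{e\}$, discrete pieces, and translated copies of $Y\cup Y^{-1}$; both routes rest on the same two facts (openness of $X\cup X^{-1}$ in $F_{2}(X)$ and the local homeomorphism off $i_{2}^{\leftarrow}(e)$), and yours has the mild advantage of also covering local $\omega$-boundedness without worrying about whether that property passes to products. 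One small caution: your justification that $F_{2}(Y)\setminus\{e\}$ is open via ``$yO_{2}(U)\cap F_{2}(X)\subseteq Y$'' is shaky, since $O_{2}(U)$ is a neighbourhood of $e$ only in $F_{2}(X)$, not in $F(X)$, so $yO_{2}(U)\cap F_{2}(X)$ is not obviously a neighbourhood of $y$ in $F_{2}(X)$; it is cleaner to deduce the openness of $Y\cup Y^{-1}$ directly from the openness of $X\cup X^{-1}$ in $F_{2}(X)$ together with $Y$ being clopen in $X$.
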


\begin{proof}
($a$) Since $Y$ is a retract of $X$, the closed subgroup $F(Y, X)$ of $F(X)$ is topologically isomorphic to the group $F(Y)$. Therefore, $F_{2}(Y)$ can be identified with the closed subspace $F_{2}(Y, X):=F(Y, X)\cap F_{2}(X)$ of $F_{2}(X)$. Thus $F_{2}(Y)$ is a $r$-space if $F_{2}(X)$ is a $r$-space.

Conversely, suppose that $F_{2}(Y)$ is a $r$-space. Since $Y$ is closed in $X$, it is easy to see that $X$ is a $r$-space. Then $(X\bigoplus X^{-1}\bigoplus\{e\})^{2}$ is a $r$-space. Further, since the natural mapping $i_{2}: (X\bigoplus X^{-1}\bigoplus\{e\})^{2}\rightarrow F_{2}(X)$ is a local homeomorphism from $(X\bigoplus X^{-1}\bigoplus\{e\})^{2}$ to $F_{2}(X)$ at each point of $(X\bigoplus X^{-1}\bigoplus\{e\})^{2}\setminus i_{2}^{\leftarrow}(e)$, each point of $F_{2}(X)\setminus \{e\}$ is a $r$-point. It remains to show that $e$ is a $r$-point in $F_{2}(X)$.

Since $e$ is a $r$-point in $F_{2}(Y)$, there exists a sequence of open neighborhoods $\{U_{n}: n\in\N\}$ of $e$ in $F_{2}(Y)$ such that $\{U_{n}: n\in\N\}$ is a $r$-sequence. For every $n\in\N$, it follows from Theorem~\ref{neighborhood base} that there exists a $W_{n}\in\mathscr{U}_{Y}$ such that $O_{2}(W_{n})\subset U_{n}$. Put $W^{\ast}_{n}:=W_{n}\cup\{(d, d): d\in D\}$ for each $n\in\N$. Clearly, each $W^{\ast}_{n}$ belongs to the finest uniformity of $X$, and $O_{2}(W_{n})=O_{2}(W^{\ast}_{n})$. Since $F(Y)$ is topologically isomorphic to the closed subgroup $F(Y, X)$ of $F(X)$, the sequence $\{O_{2}(W^{\ast}_{n}): n\in\N\}$ is a $r$-sequence at $e$ in $F_{2}(X)$.

($b$) The proof is quite similar to ($a$), so we omit it.

($c$) It is well known that $A(X)$ is topologically isomorphic in the natural way with $A(Y)\times A(D)$. If we identify $A(X)$ and $A(Y)\times A(D)$ under this isomorphism, then we have $A_{n}(X)\subseteq A_{n}(Y)\times A_{n}(D)$ and $A_{n}(X)$ is closed in $A_{n}(Y)\times A_{n}(D)$ for each $n\in\N$. Since $A(D)$ is discrete, each $A_{n}(D)$ is discrete, and hence it is easy to see that (c) holds.
\end{proof}

\begin{proposition}
If  $A_{2}(X)$ is locally $\omega$-bounded, then $F_{2}(X)$ is also locally $\omega$-bounded
\end{proposition}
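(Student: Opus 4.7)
The plan is to transfer local $\omega$-boundedness from $A_2(X)$ to $F_2(X)$ via the natural abelianization homomorphism $\pi : F(X) \to A(X)$, whose restriction $\pi|_{F_2(X)} : F_2(X) \to A_2(X)$ is a continuous surjection with finite fibers of size at most $2$: for $a = x + y \in A_2(X)$ with $x, y \in X \cup (-X) \cup \{0\}$, the preimage in $F_2(X)$ is a subset of $\{xy, yx\}$. Fix $g \in F_2(X)$, let $V$ be an open neighborhood of $\pi(g)$ in $A_2(X)$ witnessing local $\omega$-boundedness, and set $U := (\pi|_{F_2(X)})^{-1}(V)$, an open neighborhood of $g$. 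I would then verify that $U$ itself witnesses local $\omega$-boundedness at $g$.

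The reduction is straightforward: for a countable $\{h_n\} \subseteq \overline{U}$, continuity of $\pi$ gives $\{\pi(h_n)\} \subseteq \overline{V}$, whose closure $K$ in $\overline{V}$ is compact by hypothesis; then $\{h_n\} \subseteq \pi^{-1}(K) \cap F_2(X)$, so the whole argument comes down to proving that, for every compact $K \subseteq A_2(X)$, the set $\pi^{-1}(K) \cap F_2(X)$ is compact in $F_2(X)$. Granting this, $\overline{\{h_n\}}$ is a closed subset of this compact set, hence compact, as required.

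To establish this key fact, I would use the ``support'' principle for free topological groups (in the spirit of \cite[Theorem 7.4.1 and Corollary 7.4.4]{AT2008}) to find a compact $C \subseteq X$ such that $K \subseteq A_2(C)$, meaning each element of $K$ is a sum of at most two elements of $C \cup (-C) \cup \{0\}$. A direct case analysis on reduced length-$2$ words then yields the identity $\pi^{-1}(A_2(C)) \cap F_2(X) = F_2(C)$; and $F_2(C)$ is compact because it is the continuous image of the compact space $(C \cup C^{-1} \cup \{e\})^2$ under $i_2$. Consequently $\pi^{-1}(K) \cap F_2(X)$, being a closed subset of the compact set $F_2(C)$, is compact.

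The main obstacle is the lifting step: producing a compact $C \subseteq X$ with $K \subseteq A_2(C)$ for a given compact $K \subseteq A_2(X)$. A robust alternative (avoiding this support lemma entirely) is to introduce the involution $\sigma : F_2(X) \to F_2(X)$ that fixes $F_1(X)$ pointwise and sends each length-$2$ reduced word $ab$ to $ba$, verify $\sigma$ is a homeomorphism (using the local-homeomorphism property of $i_2$ off $F_1(X)$ and the identity $\sigma(O_2(U)) = O_2(U^{-1})$ near $e$ from Theorem~\ref{neighborhood base}), observe that $\pi|_{F_2(X)}$ is the orbit map of the induced $\mathbb{Z}/2$-action, and conclude that $\pi|_{F_2(X)}$ is perfect; the proposition then follows directly from Proposition~\ref{p333}.
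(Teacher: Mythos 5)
Your proposal ultimately lands exactly where the paper does: the paper's entire proof is to quote \cite[Lemma 2.10]{T2003}, which says that the restriction $\varphi\uhr F_{2}(X)$ of the canonical homomorphism is a \emph{perfect} map of $F_{2}(X)$ onto $A_{2}(X)$, and then to apply Proposition~\ref{p333}. So your ``robust alternative'' in the last paragraph is the intended argument; the difference is that you try to re-derive the perfectness of $\varphi\uhr F_{2}(X)$ rather than cite it, and both of your derivations have soft spots. In the first (hands-on) route, the reduction to ``$\pi^{-1}(K)\cap F_{2}(X)$ is compact for compact $K\subseteq A_{2}(X)$'' is fine, but the lifting step you flag is a real gap for arbitrary Tychonoff $X$: the support of a compact subset of $A_{2}(X)$ is in general only known to be \emph{bounded} in $X$, and its closure need not be compact unless $X$ is (for instance) a $\mu$-space, so you cannot in general find a compact $C\subseteq X$ with $K\subseteq A_{2}(C)$. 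In the second route, the involution $\sigma$ is a genuine homeomorphism of $F_{2}(X)$ (your verification via the local homeomorphism $i_{2}$ off $F_{1}(X)$, the clopenness of $X\cup X^{-1}$, and $\sigma(O_{2}(U))=O_{2}(U^{-1})$ is sound), and the fibers of $\pi\uhr F_{2}(X)$ are exactly the $\sigma$-orbits; but ``orbit map of a $\mathbb{Z}/2$-action, hence perfect'' requires knowing that $A_{2}(X)$, \emph{with its subspace topology inherited from $A(X)$}, actually carries the quotient topology of that action — equivalently, that $\pi\uhr F_{2}(X)$ is a closed map onto $A_{2}(X)$. That is precisely the nontrivial content of \cite[Lemma 2.10]{T2003} and it does not follow merely from identifying the fibers; as written this step is circular. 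The cleanest fix is simply to cite that lemma, after which your (and the paper's) appeal to Proposition~\ref{p333} finishes the proof.
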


\begin{proof}
 Let $\varphi: F(X)\rightarrow A(X)$ be the canonical homomorphism from $F(X)$ onto $A(X)$. Then the restriction $\varphi_{2}:=\varphi\upharpoonright F_{2}(X)$ is a perfect
mapping from $F_{2}(X)$ onto $A_{2}(X)$ by \cite[Lemma 2.10]{T2003}. Hence $F_{2}(X)$ is locally $\omega$-bounded by Proposition~\ref{p333}.
\end{proof}

By Proposition~\ref{p55}, we obtain the main result in this section.

\begin{theorem}\label{t222}
If $X$ is homeomorphic to the topological sum of an $\omega$-bounded space and a discrete space, then $F_{2}(X)$ is locally $\omega$-bounded.
\end{theorem}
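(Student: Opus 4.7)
The plan is to reduce via Proposition~\ref{p55}(b) to the case where $X$ itself is $\omega$-bounded, and then show the stronger statement that $F_{2}(Y)$ is actually $\omega$-bounded whenever $Y$ is. Since $\omega$-boundedness trivially implies local $\omega$-boundedness (the whole space serves as the required open neighborhood $U$ at every point, because $\overline{U}=U=Y$ is $\omega$-bounded), this will finish the proof.

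Write $X = Y\oplus D$ with $Y$ $\omega$-bounded and $D$ discrete. By Proposition~\ref{p55}(b), it is enough to prove $F_{2}(Y)$ is locally $\omega$-bounded. First I will verify that the topological sum $Y\oplus Y^{-1}\oplus\{e\}$ is $\omega$-bounded: any countable subset $C$ of this sum splits as $C_{1}\cup C_{2}\cup C_{3}$ with each $C_{i}$ contained in the respective clopen summand, so $\overline{C}=\overline{C_{1}}\cup\overline{C_{2}}\cup\overline{C_{3}}$ is a finite union of compact sets (using $\omega$-boundedness of $Y$ and its copy $Y^{-1}$), hence compact. Next, by the remark preceding Proposition~\ref{p3} that the product of $\omega$-bounded spaces is $\omega$-bounded, the square $(Y\oplus Y^{-1}\oplus\{e\})^{2}$ is $\omega$-bounded.

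Now recall from Section 2 that the natural mapping
\[
i_{2}\colon (Y\oplus Y^{-1}\oplus\{e\})^{2}\longrightarrow F_{2}(Y),\qquad i_{2}(y_{1},y_{2})=y_{1}y_{2},
\]
is continuous and surjective. Applying Proposition~\ref{p3} to $i_{2}$, we conclude that $F_{2}(Y)$ is $\omega$-bounded, hence locally $\omega$-bounded at every point (by taking $U=F_{2}(Y)$ in the definition of local $\omega$-boundedness). Combining with the reduction supplied by Proposition~\ref{p55}(b) gives that $F_{2}(X)$ is locally $\omega$-bounded, as desired.

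There is no real obstacle here; the statement is essentially a packaging of three standard preservation properties of $\omega$-boundedness (finite topological sums, finite products, continuous images), together with the retraction/sum reduction already established in Proposition~\ref{p55}(b). The only mildly delicate point is checking that finite topological sums of $\omega$-bounded spaces are $\omega$-bounded, which is immediate once one observes that summands are clopen so closures distribute across the splitting of a countable set.
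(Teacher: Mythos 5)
Your proof is correct and follows essentially the same route as the paper: the paper derives the theorem directly from Proposition~\ref{p55}(b), leaving implicit exactly the argument you supply, namely that for $\omega$-bounded $Y$ the space $(Y\oplus Y^{-1}\oplus\{e\})^{2}$ is $\omega$-bounded (finite sums and finite products preserve $\omega$-boundedness) and hence so is its continuous image $F_{2}(Y)$ under $i_{2}$ by Proposition~\ref{p3}. In fact you establish the slightly stronger conclusion that $F_{2}(Y)$ is globally $\omega$-bounded, which the paper does not state explicitly.
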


\maketitle
\section{Examples and Questions}
First, some examples are presented to show the applications of our results.

\begin{example}\label{e0}
There exists a first-countable space $X$ such that $F_{4}(X)$ is an $\omega$-bounded space (in particular, it is a $r$-space) and $F_{2}(X)$ is not of pointwise countable type.
\end{example}

\begin{proof}
Let $X:=[0, \omega_{1})$ be endowed with the order topology. Then $X$ is an $\omega$-bounded and non-compact space. By Proposition~\ref{p3}, the subspace $F_{4}(X)$ is an $\omega$-bounded space. Suppose that $F_{2}(X)$ is of pointwise countable type. Then the derived set $X^{\prime}$ is compact in $X$ by \cite[Lemma 2.1]{T2003}, and hence $X$ is paracompact, which is a contradiction.
\end{proof}

\begin{remark}
 (1) By Example~\ref{e0}, we can not omit the condition ``$\mu$-space'' in Theorem~\ref{t4}.

(2) By Example~\ref{e0}, we know that $X$ may not be compact in Theorem~\ref{p5}.
\end{remark}

\begin{example}
There exists a space $X$ such that $F_{2}(X)$ is a $r$-space and $F_{4}(X)$ is not a $r$-space.
\end{example}

\begin{proof}
Let $X:=[0, \omega_{1})\bigoplus D$, where $[0, \omega_{1})$ is endowed with the order topology and $D$ is an uncountable set with a discrete topology. By Theorem~\ref{t222}, $F_{2}([0, \omega_{1})\bigoplus D)$ is a $r$-space. However, it follows from Corollary~\ref{p2} that $F_{4}([0, \omega_{1})\bigoplus D)$ is not a $r$-space since $[0, \omega_{1})\bigoplus D$ is not pseudocompact.
\end{proof}

In \cite{T2003}, P. Nickolas and M. Tkachenko proved that $F_{2}(X)$ is locally compact iff $A_{2}(X)$ is locally compact iff $X$ is homeomorphic to the topological sum of a compact space and a discrete space. From  Theorem~\ref{t222}, it is natural to pose the following question.

\begin{question}
Let $F_{2}(X)$ be locally $\omega$-bounded. Is $X$ homeomorphic to the topological sum of an $\omega$-bounded space and a discrete space?
\end{question}

Further, we have the following question.

\begin{question}
If $F_{2}(X)$ is a non-discrete $r$-space, is $X$ homeomorphic to the topological sum of an $\omega$-bounded space and a discrete space?
\end{question}

In \cite{T2003}, P. Nickolas and M. Tkachenko proved that $F_{2}(X)$ is locally compact iff $F_{3}(X)$ is locally compact for any space. Therefore, it is natural to pose the following question.

\begin{question}
If $F_{2}(X)$ is a $r$-space (resp. $q$-space), is $F_{3}(X)$ also a $r$-space ($q$-space)?
\end{question}

By Theorem~\ref{p5}, we do not know the answer to the following question.

\begin{question}
Let $X$ be a space. If $F_{4}(X)$ is a $q$-space, is $X$ either countably compact or a $cf$-space?
\end{question}

\end{document}